\definecolor{darkred}{rgb}{0.5,0,0}
\definecolor{gcolor}{rgb}{0.004,0.396,0.741}	% Pantone 300 C in RGB
\journalname{Journal}
\begin{document}

%Integration in Closed Form of Newton Potentials and Forces between Two Cubes by a Laplace Transform

\title{The Challenge of Sixfold Integrals: The Closed-Form Evaluation of Newton Potentials between Two Cubes%\thanks{Grants or other notes
%about the article that should go on the front page should be
%placed here. General acknowledgments should be placed at the end of the article.}
}

\titlerunning{Closed-Form Integration of Newton Potentials between Two Cubes}        % if too long for running head

\author{Folkmar Bornemann}

%\authorrunning{Short form of author list} % if too long for running head

\institute{Folkmar Bornemann \at
              Department of Mathematics\\ 
              Technical University of Munich \\
              \email{bornemann@tum.de}}

\date{April 25, 2022}
% The correct dates will be entered by the editor

\maketitle

\begin{abstract}
The challenge of explicitly  evaluating, in elementary closed form, the weakly singular sixfold integrals for potentials and forces between two cubes has been taken up at various places in the mathematics and physics literature. It created some strikingly specific results, with an aura of arbitrariness, and  a single intricate general procedure due to Hackbusch. Those scattered instances were mostly addressing the problem heads on, by successive integration while keeping track of a thicket of primitives generated at intermediate stages.

In this paper we present a substantially easier and shorter approach, based on a Laplace transform of the kernel. We clearly exhibit the structure of the results as obtained by an explicit algorithm, just  computing with rational polynomials. The method extends, up to the evaluation of single integrals, to higher dimensions. Among other examples, we easily reproduce Fornberg's startling closed form solution of Trefethen's two-cubes problem and Waldvogel's  symmetric formula for the Newton potential of a rectangular cuboid.
% \PACS{PACS code1 \and PACS code2 \and more}
% \subclass{MSC code1 \and MSC code2 \and more}
\end{abstract}

{\small\keywords{Newton potential, integration in finite terms, Laplace transform, error function}}

\section{Introduction}\label{sec:intro}

Here are three openings for this paper, looking at some remarkable spotlights of closed form evaluations of Newton potentials and forces in reverse chronological order. Interestingly, the protagonists started their work each time from scratch, completely independent of each other.
\begin{itemize}
\item[1.] In Oct. 2006, as yet another instance of his “10-digit problems”, Nick Trefethen cooked up the following challenge for the graduate students in Oxford’s Numerical Analysis “Problem Solving Squad” \cite{Tref2}:

\begin{quote} 
{\em Two homogeneous unit cubes of unit mass attract each other gravitationally according to Newton's law with unit gravitational constant. Their centers are one unit apart, so the cubes are right up against each other, touching. 
What is the force, to ten digit accuracy?\\*[1mm]\phantom{XXX}\hfill{\small\em [slightly edited for a better fit]}}
\end{quote}

\medskip

With some background in theoretical mechanics, the challenge amounts to evaluating the weakly singular sixfold integral
\begin{equation}\label{eq:Trefethen}
F = \iiint\limits_{[1,2]\times[0,1]^2} \iiint\limits_{[0,1]^3}\frac{(x_1-y_1)\;dy_1dy_2dy_3\,\,dx_1dx_2dx_3}{\left((x_1-y_1)^2+(x_2-y_2)^2+(x_3-y_3)^2\right)^{3/2}}.
\end{equation}
As a problem in numerical analysis, using a delightful subdivision idea the challenge was nailed by graduate student Alex Prideaux, see \cite[p.~124]{Tref1}:
\[
F =0.92598\,12605\,57\cdots \,.
\]
While on sabbatical at Oxford in 2010, Bengt Fornberg learnt about the problem, thought it ought to be doable in closed form by elementary means and, after a week full of fun and intense work,  surprised everybody with\footnote{In a personal communication Fornberg kindly shared his recollections: “Playing with that was a fun about week-long episode [\,\ldots] I do not any longer have any of my paper scribbles left from when working out the successive integrals, but I recall a lot of integrations by parts (but no fancier calculus ingredients than that). I used {\em Mathematica} in `calculator mode', mostly to verify steps and to ensure typo-free bookkeeping of terms. Actually using {\em Mathematica} (at least in how I tried that) was somewhat dangerous, as multivalued functions arose, and I recall some non-relevant choices. I ended up about four days later with a pretty horrendous expression  which you, in the attached [26 terms along the lines of $-\frac13\log(4\,895\,281+1\,998\,541\sqrt6)$ and the like], can see my numerical verification of. From that point on, what remained was about another three days of `clean-up'. I was not at all sophisticated about that, mostly using the general technique of `messing around' with basic log and arctan formulas.”} 
\begin{multline}\label{eq:Fornberg}
F = \frac{1}{3} \Big(-14+2 \sqrt{2}-4 \sqrt{3}+10
   \sqrt{5}-2 \sqrt{6} + 26 \log 2 - 2\log 5 \\
   +10 \log(1+\sqrt{2})+20 \log
  (1+\sqrt{3})
   -35 \log(1+\sqrt{5})\\
   +6 \log
  (1+\sqrt{6})-2 \log
  (4+\sqrt{6}) +\frac{26 \pi }{3}-22 \arctan(2
   \sqrt{6}) \Big).
\end{multline}
Of course, to be really confident, it was checked against the numerical value.

\quad\; Asked by a reader whether {\em Mathematica} can do it, Michael Trott discussed the problem in the Oct. 2012 entry \cite{Trott} of his blog. He explored, by a Laplace transform technique,\footnote{In the physics literature on this topic, the Laplace transform technique was previously used by Orion Ciftja in 2010--11 for the evaluation, in elementary finite terms, of the electrostatic self-energy of a homogeneous unit square \cite{Ciftja2} and a homogeneous unit cube \cite{Ciftja3}.} the Newton potential between two homogeneous unit cubes with centers at a distance of $X$,  then taking the derivative at $X=1$; in this way, basically, reproducing Fornberg's solution. 

\quad\;Understanding, and simplifying, the mathematical structure of that blog entry, and of Fornberg's solution in the first place, motivated a preliminary version of this paper.

\item[2.] In 2001, with boundary element methods in mind, Wolfgang Hack\-busch~\cite{Hackbusch} devised a direct method to calculate explicit expressions for 
gravitational or electrostatic potentials, their evaluation being a task often required in weakly singular integral equations. He considered the sixfold integral
\begin{equation}\label{eq:Vvanilla}
V := \iiint\limits_Q \iiint\limits_{Q'}\frac{x_1^{n_1}x_2^{n_2}x_3^{n_3} y_1^{m_1}y_2^{m_2}y_3^{m_3}\;dy_1dy_2dy_3\,\,dx_1dx_2dx_3}{\sqrt{(x_1-y_1)^2+(x_2-y_2)^2+(x_3-y_3)^2}},
\end{equation}
where $n_j,m_j\in\N_0:=\{0,1,2,\ldots\}$ and $Q$, $Q'$ are cuboids of the form
\[
Q = [a_1,b_1]\times[a_2,b_2]\times[a_3,b_3],\qquad Q' = [a'_1,b'_1]\times[a'_2,b'_2]\times[a'_3,b'_3].
\]
The monomials in the numerator correspond, e.g., to polynomial approximations of inhomogeneous mass or charge distributions (cf. \cite{cubic}), or to the possible choice of higher order finite elements in a Galerkin method. 

\quad\; Hackbusch proceeded by successive integration, from the first integral to the sixth in the order $x_1,y_1,x_2,y_2,x_3,y_3$, while carefully controlling the structure of the primitives as linear combinations of terms of a certain form (altogether there are 18 different such forms, see \cite[Table (6.5)]{HackbuschCode}). The description of the terms, and the proofs of their recursion formulae, extend over 16 pages in his paper.
To deal with concrete cases, he assigned the actual computations to a Pascal program \cite{HackbuschCode} with about 3000 lines of code, including \TeX\ output of the results. Here is a typical result, very much resembling the look and feel of Fornberg's expression \eqref{eq:Fornberg}:
\begin{multline}\label{eq:Hackbusch}
\iiint\limits_{[0,1]^3} \iiint\limits_{[0,1]^3}  \frac{x_1x_2x_3 \;y_1^2y_2^2y_3^2\;\, dy_1dy_2dy_3\,\,dx_1dx_2dx_3}{\sqrt{(x_1-y_1)^2+(x_2-y_2)^2+(x_3-y_3)^2}} = \frac{1}{120}
-\frac{\sqrt{2}}{336}
-\frac{\sqrt{3}}{224}\\*[0mm]
+\frac{13}{560}\log( 1+\sqrt{2}) 
+\frac{1}{70}\log( 1+\sqrt{3}) -\frac{\log  \sqrt{2}}{70}
-\frac{61\pi}{13440}.
\end{multline}
As a verification, Hackbusch  checked the examples in \cite[§3.14]{Hackbusch} against numerical values; whereas we enjoy the luxury to check against his code.

\quad\;Inspired by the exploration found in Trott's blog entry~\cite{Trott}, we present a new general method
for the direct evaluation of $V$ that is much easier to derive, to describe (about 5 pages), and to code (25 lines of basic {\em Mathematica} code;\footnote{See the {\em Mathematica} notebook coming with the sources at \href{https://arxiv.org/abs/2204.02793}{arXiv:2204.02793}; it includes all the algorithmic calculations reported below and also some supplementary material.} but pen and paper would suffice for all the examples given here).

\medskip

\item[3.] In 1976,  to simplify 
MacMillan's \cite[pp. 72--80]{MacMillan} classical formula from 1930 which extends over $1\frac12$ pages, Jörg Waldvogel \cite[Eq.~(15)]{MR442257} obtained that 
\[
V(y_1,y_2,y_3):=\iiint\limits_{Q}\frac{dx_1dx_2dx_3}{\sqrt{(x_1-y_1)^2+(x_2-y_2)^2+(x_3-y_3)^2}},
\]
the potential of a homogeneous rectangular cuboid $Q$
at a point $(y_1,y_2,y_3)$, is given by a symmetric formula that has 48 terms if expanded: namely, short and crisp, writing $\rho_\delta=\sqrt{\delta_1^2+\delta_2^2+\delta_3^2}$ and $\displaystyle\sum_\text{cyc:$\delta$}$ for sums cyclic in $(\delta_1,\delta_2,\delta_2)$,\\*[-4mm]
\begin{multline}\label{eq:waldvogel}
V(y_1,y_2,y_3) \\*[1mm]=\sum_{\substack{\delta_j \in \{y_j-a_j,b_j-y_j\}\\*[1mm](j=1,2,3)}} \sum_{\text{cyc:$\delta$}}
\bigg( \delta_1\delta_2 \arctanh\frac{\delta_3}{\rho_\delta} - \frac{\delta_1^2}{2}\arctan\frac{\delta_2\delta_3}{\delta_1\rho_\delta}\bigg).
\end{multline}
By nicely exploiting symmetry and some homogeneity (via Euler's theorem), Wald\-vogel just had to evaluate a double integral.
In addition, subsequently integrating over $y_1,y_2,y_3$, he obtained  the self-energy of the unit cube,
\begin{multline}\label{eq:selfcube}
\frac{1}{2}\iiint\limits_{[0,1]^3}\iiint\limits_{[0,1]^3}\frac{dy_1dy_2dy_3\;dx_1dx_2dx_3}{\sqrt{(x_1-y_1)^2+(x_2-y_2)^2+(x_3-y_3)^2}} \\*[1mm]= \frac{1}{5}\Big(1+\sqrt{2}-2\sqrt{3}\,\Big)+ \log\Big((1+\sqrt{2})(2+\sqrt{3})\Big) -\frac{\pi}{3},
\end{multline}
a result that has repeatedly been rediscovered, without reference though; see, e.g., \cite[p.~207]{Hackbusch} and \cite[Eq.~(6)]{Ciftja3}.

\quad\; As a bonus, we show in Section~\ref{sect:waldvogel} that the method presented here allows us to straightforwardly reproduce the lovely symmetric formula \eqref{eq:waldvogel}.
\end{itemize}
The main objective of this paper is to give these three spotlights a common frame that allows an understanding of their structure as well as a comparatively simple algorithm to obtain, among other things, all previous results and formulae.

\subsection*{{\em The basic idea and structure of the results in 3D}}

 The simple idea of this paper, quite classical in multivariate integration,\footnote{As it deserves to be better known, Appendix A discusses two historic examples: one in multivariate calculus, attributed to Cauchy, and one in probability theory, due to Montroll.}  is to turn a sixfold integral such as \eqref{eq:Trefethen}  into a single integral of a product of three double integrals by means of a Laplace transform (that is, by bringing yet another integral into the fold, followed by a change of order of integration). 

Let us describe this for the evaluation of the Newton potential $V$ as given by \eqref{eq:Vvanilla}. First, inserting the following Laplace transform (where the substitution $s=\sigma^2$ simplifies things right at the beginning)
\begin{equation}\label{eq:mean}
\frac{1}{\sqrt{t}} = \frac{1}{\sqrt{\pi}}\int_0^\infty \frac{e^{-st}}{\sqrt{s}}\,ds=\frac{2}{\sqrt{\pi}} \int_0^\infty e^{-\sigma^2 t}\,d\sigma\qquad (t>0)
\end{equation}
gives, by Fubini's theorem,
\begin{multline*}
V = \frac{2}{\sqrt{\pi}} \iiint\limits_Q \iiint\limits_{Q'} x_1^{n_1}x_2^{n_2}x_3^{n_3} y_1^{m_1}y_2^{m_2}y_3^{m_3}\\*[1mm]\left(\int_{0}^\infty  e^{-\sigma^2 \left((x_1-y_1)^2+(x_2-y_2)^2+(x_3-y_3)^2\right)}\,d\sigma\right)  \,dy_1dy_2dy_3\,\,dx_1dx_2dx_3
\end{multline*}
\begin{multline}\label{eq:V}
= \frac{2}{\sqrt{\pi}}\int_{0}^\infty \prod_{j=1}^3\left( \int_{a_j}^{b_j} \int_{a_j'}^{b_j'} x_j^{n_j}y_j^{m_j} e^{-\sigma^2(x_j-y_j)^2} \,dy_j\,dx_j \right)\,d\sigma\\*[1mm]=
\frac{2}{\sqrt{\pi}}\int_{0}^\infty f_1(\sigma)f_2(\sigma)f_3(\sigma)\,d\sigma,
\end{multline}
where $f_j$ specifies the following function (with $n$, $m$ suppressed in the notation) for the parameters and variables with index $j$:
\begin{equation}\label{eq:F}
f(\sigma):= \int_{a}^{b} \int_{a'}^{b'} x^{n}y^{m} e^{-\sigma^2(x-y)^2} \,dy\,dx.
\end{equation}

Next, as explained in Section~\ref{sect:f}, by introducing the error function $\Erf(\xi)$ simply as the unique odd primitive (antiderivative) of the Gaussian $e^{-\xi^2}$, we get an algorithmic description of the factors $f(\sigma)$: they are linear combinations
of terms of the form
\[
\sigma^{-2k} e^{-\sigma^2 \delta^2} \quad\text{and}\quad \sigma^{-(2k-1)}\Erf(\sigma\delta) \qquad (k = 1,2,\ldots\,),
\]
where $\delta$ is a difference between one of the bounds of the outer integral and one of the bounds of the inner integral. Actually, this form allows right away the rather straightforward numerical evaluation of $V$ by just integrating the single integral in \eqref{eq:V}.

Finally, as explained in  Section~\ref{sect:V}, the error function serves merely as an intermediary on the way to the elementary closed form of $V$. We arrive there by a renormalization process, based on partial integration for removing cancelling singularities at $\sigma=0$ of the integrand $f_1(\sigma)f_2(\sigma)f_3(\sigma)$. In passing, this process also reduces 
the number of possible terms to eventually just three basic forms---for which, ultimately, there are entries found in a table of integrals such as the one by Prudnikov et al. \cite{MR950173} (for the sake of completeness, we give simple proofs of these entries in Appendix B).

As explained in  Section~\ref{sect:Fj}, this algorithmic construction extends also to the evaluation of force components and yields, in summary:

\begin{thm} {\em In the 3D case, the potential and also the force components are linear combinations of terms of the form\footnote{Recall the notation $\rho_\delta=\sqrt{\delta_1^2+\delta_2^2+\delta_3^2}$ that we introduced in Waldvogel's formula \eqref{eq:waldvogel}.}
\[
\frac{1}{\rho_\delta}, \quad  \arctanh\frac{\delta_3}{\rho_\delta}, \quad
\arctan\frac{\delta_2\delta_3}{\delta_1\rho_\delta}, 
\]
where $(\delta_1,\delta_2,\delta_3)$ satisfies $\delta_1\neq 0$ and is, up to a permutation, the component-wise difference between the coordinates of a vertex of $Q$ and a vertex of $Q'$; the coefficients of the terms are rational polynomials in the integral bounds.}
\end{thm}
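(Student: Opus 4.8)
\medskip\noindent\emph{Proof strategy.}\quad The plan is to read the three-form structure off the representation~\eqref{eq:V}, $V=\tfrac{2}{\sqrt\pi}\int_0^\infty f_1(\sigma)f_2(\sigma)f_3(\sigma)\,d\sigma$, using the description of the one-dimensional factors~\eqref{eq:F} from Section~\ref{sect:f}: each $f_j(\sigma)$ is a linear combination, with coefficients that are polynomials with rational coefficients in the four bounds $a_j,b_j,a'_j,b'_j$, of the atoms $\sigma^{-2k}e^{-\sigma^2\delta^2}$ and $\sigma^{-(2k-1)}\Erf(\sigma\delta)$, where $\delta$ runs through the signed differences of a bound of the $j$-th outer interval and a bound of the $j$-th inner interval — i.e.\ through the $j$-th coordinates of vertex differences of $Q$ and $Q'$. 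Multiplying out $f_1f_2f_3$ and fusing Gaussian factors via $e^{-\sigma^2\delta_1^2}e^{-\sigma^2\delta_2^2}e^{-\sigma^2\delta_3^2}=e^{-\sigma^2\rho_\delta^2}$, every summand of the integrand becomes (a polynomial with rational coefficients in all twelve bounds) times one of the four shapes
\begin{gather*}
\sigma^{-N}e^{-\sigma^2\rho_\delta^2},\qquad \sigma^{-N}e^{-\sigma^2(\delta_1^2+\delta_2^2)}\Erf(\sigma\delta_3),\\
\sigma^{-N}e^{-\sigma^2\delta_1^2}\Erf(\sigma\delta_2)\Erf(\sigma\delta_3),\qquad \sigma^{-N}\Erf(\sigma\delta_1)\Erf(\sigma\delta_2)\Erf(\sigma\delta_3),
\end{gather*}
up to a permutation of the indices, where $\delta=(\delta_1,\delta_2,\delta_3)$ is a vertex difference of $Q$ and $Q'$ and $N$ is a positive integer with the parity of the number of $\Erf$-factors.

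\smallskip

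The renormalization is where I expect the real work to lie. Individually these summands generally have a non-integrable singularity at $\sigma=0$ (worst case $\sigma^{-6}$, from three Gaussian atoms), whereas $f_1f_2f_3$ itself extends to an even real-analytic function on $[0,\infty)$ that decays like $\sigma^{-3}$ at infinity; hence, in the decomposition of the integrand, the negative-order Laurent coefficients at $\sigma=0$ cancel in aggregate. I would use the partial-integration recipe of Section~\ref{sect:V}: in each $\int_0^\infty\sigma^{-N}\phi(\sigma)\,d\sigma$, with $\phi$ a product of Gaussian and $\Erf$ factors, integrate the pure power $\sigma^{-N}$ and differentiate $\phi$, discarding the boundary term at $\sigma=0$ — legitimate in the sum because what is discarded is precisely a cancelling Laurent part. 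Since $\int\sigma^{-N}\,d\sigma=\sigma^{1-N}/(1-N)$ carries only a rational constant, and differentiating $\phi$ produces again a product of Gaussian and $\Erf$ factors times an extra factor $-2(\delta_i^2+\cdots)$ or $\delta$ — polynomials in the bounds — this never divides by a polynomial in the bounds, so all accumulated coefficients remain rational polynomials; moreover each step lowers $N$ by $2$ and, when the derivative lands on a factor $\Erf(\sigma\delta)$, replaces it by $\delta\,e^{-\sigma^2\delta^2}$, collapsing onto a shape with one fewer $\Erf$. Iterating until the power is used up leaves only the three convergent model integrals
\begin{gather*}
\int_0^\infty e^{-\sigma^2\rho_\delta^2}\,d\sigma,\qquad \int_0^\infty\frac{e^{-\sigma^2(\delta_1^2+\delta_2^2)}}{\sigma}\,\Erf(\sigma\delta_3)\,d\sigma,\\
\int_0^\infty\frac{e^{-\sigma^2\delta_1^2}}{\sigma^2}\,\Erf(\sigma\delta_2)\,\Erf(\sigma\delta_3)\,d\sigma
\end{gather*}
(the convergent triple-$\Erf$ integral being brought to these by one further partial integration) together with pure powers $\int_0^\infty\sigma^{-N}\,d\sigma$, whose renormalized value is $0$ — which is why no $1/\rho_\delta$ with $\rho_\delta=0$, i.e.\ from a shared vertex, survives.

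\smallskip

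Finally, by the table entries collected in Appendix~B these three model integrals equal $\tfrac{\sqrt\pi}{2}\rho_\delta^{-1}$, $\tfrac{\sqrt\pi}{2}\arctanh(\delta_3/\rho_\delta)$, and a rational-polynomial combination of $\arctan(\delta_2\delta_3/(\delta_1\rho_\delta))$ with the two previous forms; the last requires the surviving Gaussian's parameter $\delta_1$ to be nonzero, which is exactly the side condition in the statement. If that coordinate vanishes, the relevant integral $\int_0^\infty\sigma^{-2}\Erf(\sigma\delta_2)\Erf(\sigma\delta_3)\,d\sigma$ is still convergent and evaluates, after a permutation bringing a nonzero coordinate to the front, to $\arctanh$-terms already on the list. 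Multiplying through by $2/\sqrt\pi$ cancels every $\sqrt\pi$, and the surplus powers of $\rho_\delta^2=\delta_1^2+\delta_2^2+\delta_3^2$ produced along the way are genuine polynomials in the bounds, hence absorbed into the coefficients while a single factor $\rho_\delta^{-1}$ is kept — giving the claim for $V$. For the force components one starts instead from $t^{-3/2}=\tfrac{4}{\sqrt\pi}\int_0^\infty\sigma^2e^{-\sigma^2t}\,d\sigma$ and folds the extra numerator factor $(x_i-y_i)$ into the $i$-th one-dimensional integral~\eqref{eq:F} (raising its polynomial weight by one degree); the additional $\sigma^2$ only shifts the power budget, so the argument runs verbatim (Section~\ref{sect:Fj}). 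The main obstacle throughout is the bookkeeping of the renormalization: verifying that the organized partial integrations discard only mutually cancelling boundary terms, that the recursion terminates, and that it lands exactly on the three model integrals with rational-polynomial coefficients and no spurious higher powers of $\rho_\delta^{-1}$ or non-elementary remainders.
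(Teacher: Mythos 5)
For the potential $V$ your route coincides with the paper's: the Laplace transform \eqref{eq:V}, the structural description of the one-dimensional factors (Lemma~\ref{lem:F}), renormalization by repeated integration by parts with the boundary terms discarded because they cancel in aggregate, and finally the three table integrals \eqref{eq:I1}--\eqref{eq:I3}. The small deviations are cosmetic: your third ``model integral'' $\int_0^\infty\sigma^{-2}e^{-\sigma^2\delta_1^2}\Erf(\sigma\delta_2)\Erf(\sigma\delta_3)\,d\sigma$ is one partial integration short of the paper's final shape $\delta_1\,e^{-\sigma^2\delta_1^2}\Erf(\sigma\delta_2)\Erf(\sigma\delta_3)$, and it is exactly that explicit factor $\delta_1$ (produced by the rules, cf.\ Lemma~\ref{lem:G}) which cancels the $1/(2\delta_1)$ in \eqref{eq:I3} and keeps the coefficients polynomial in the bounds --- a point worth stating rather than leaving implicit, but not a flaw.

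The genuine gap is in the force components, where you claim the extra factor $(x_i-y_i)$ together with $2\sigma^2$ ``only shifts the power budget, so the argument runs verbatim.'' It does not, by naive counting: the numerator now has degree $n+m+1$, so a blind application of Corollary~\ref{cor:f} gives $q^*_{n,m}$ of degree $n+m+2$, and after rescaling and multiplying by $2\sigma^2$ the factor $f^*(\sigma)$ would contain a term proportional to $\sigma\,\Erf(\sigma\delta)$, which grows at infinity; in the product with the other two factors this produces terms $\sigma^{-1}\Erf(\sigma\delta_1)\Erf(\sigma\delta_2)\Erf(\sigma\delta_3)$ that are neither integrable at $\sigma\to\infty$ nor reachable by the replacement rules --- precisely the failure mode the paper exhibits for the integral $H$ in Section~\ref{sect:Fj}, where the kernel is $r^{-3}$ but the factor $(x_1-y_1)$ is absent. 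What saves the force case is the cancellation stated in Lemma~\ref{lem:reduction}: because the extra factor is exactly $(x-y)$ (and not a general degree-one polynomial), the leading terms of $p^*_{n,m}$ and $q^*_{n,m}$ cancel, their degrees drop to $n+m-1$ and $n+m$, and then $2\sigma\,p^*_{n,m}(\sigma x,\sigma y)$, $2\sigma\,q^*_{n,m}(\sigma x,\sigma y)$ have the same $\sigma$-degrees as $p_{n,m}$, $q_{n,m}$, so that $f^*$ has literally the same structure as $f$ (only negative powers $\sigma^{-\nu}$, $1\le\nu\le n+m+2$) and Lemmas~\ref{lem:F}, \ref{lem:G} and Corollary~\ref{cor:V} do extend verbatim. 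Your proposal asserts this verbatim extension without establishing the cancellation that makes it true, so the force half of the theorem --- the half the paper flags as ``most important for the method to succeed'' --- is not yet proved.
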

Note that this theorem reveals a general structure that is already visible in Waldvogel's formula~\eqref{eq:waldvogel}---an example that effectively corresponds to the case that $Q'$ is shrunk to a point $(y_1,y_2,y_3)$ while the potential is properly rescaled. By rewriting the inverse hyperbolic tangent in terms of logarithms, this structure is also present in  the concrete cases \eqref{eq:Fornberg}, \eqref{eq:Hackbusch}, and \eqref{eq:selfcube}.

\subsection*{\em Extensions and limitations of the method}

When extending the class of problems to higher dimensions, or other (weakly) singular integral kernels, we hit the limits  of multiple integrals being evaluable in terms of elementary closed form expressions:

First, in dealing with force components in Section~\ref{sect:Fj}, we note the importance of factors such as $x_1-y_1$ in the numerator of \eqref{eq:Trefethen} to make the method work; by means of an example we argue that general integrals with an $r^{-3}$ kernel are highly unlikely to be evaluable in elementary finite terms at all.

Second, other dimensions than 3D are the topic of Section~\ref{sect:otherdim}. Whereas integration in elementary finite terms extends easily to the 1D and 2D cases (actually even reducing the number of possible forms in the Main Theorem from three to just two), we argue that the 4D and higher dimensional cases are highly unlikely to be evaluable in elementary finite terms at all.

 Therefore, in such cases we have to resort to the (straightforward) numerical evaluation of a single integral; with dimensions $100$ or higher posing no problem whatsoever.

\section{Evaluation of the Factor $f(\sigma)$}\label{sect:f}
Since it serves only the role of an intermediary, we normalize the error function in a slightly non-standard manner for reasons of convenience:\footnote{The standard form $\erf(\xi)$ of the error function is normalized as 
\[
\erf(\xi) := \frac{2}{\sqrt\pi}\int_0^\xi e^{-x^2}\,dx,
\]
which would yield a lot of distracting factors $\sqrt{\pi}$ in our formulae; for $\Erf(\xi)$ see \cite[§7.1]{MR2655347}.}
it is simply defined here as the unique odd primitive of the Gaussian, that is 
\begin{equation}\label{eq:Erf}
\Erf(\xi) := \int_0^\xi e^{-x^2}\,dx,
\end{equation}
from which we directly obtain the asymptotics as $\xi\to 0$, and $\xi\to\infty$, namely
\begin{equation}\label{eq:erfasymp}
\Erf(\xi) = \xi + O\big(\xi^3\big), \quad \text{and}\quad \Erf(\xi) = \frac{\sqrt{\pi}}{2} + O\big(\xi^{-1}e^{-\xi^2}\big).
\end{equation}
Writing primitives as indefinite integrals (with constants of integration determined implicitly by the given formulae), we 
get the following result:
\begin{lemma}\label{lem:AB} For $n=0,1,2,\ldots$\,, define 
\begin{align*}
A_{n}(x,y) &= u_n(x,y) e^{-(x-y)^2} + v_n(y) \Erf(x-y)\\*[1mm]
B_n(x,y) &= - \frac{u_{n+1}(x,y) }{n+1} e^{-(x-y)^2} + \frac{x^{n+1}-v_{n+1}(y)}{n+1} \Erf(x-y) ,
\end{align*}
with $u_n \in \Q[x,y]$ of degree $n-1$ and $v_n \in \Q[y]$ of degree $n$ recursively given by 
\begin{align*}
u_{n+1}(x,y) &= y\, u_n(x,y) + \frac{n}{2}u_{n-1}(x,y) -\frac{x^n}{2},\\*[1mm]
v_{n+1}(x,y) &= y\, v_n(y) + \frac{n}{2}v_{n-1}(y),
\end{align*}
and initial values taken as $u_0(x,y)=0$ and $v_0(y)=1$. Then these functions are primitives of the $n^\text{th}$-moments of $e^{-(x-y)^2}$ and $\Erf(x-y)$ wrt $x$, that is
\[
A_n(x,y) = \int x^n e^{-(x-y)^2}\,dx,\quad B_n(x,y) = \int x^n \Erf(x-y)\,dx \qquad (n\in\N_0).
\]
\end{lemma}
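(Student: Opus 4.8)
\medskip
\noindent\emph{Proof plan.} Everything hinges on a single differential-algebraic identity, so my plan is to isolate it first and then read off both closed forms. Since $\Erf'(\xi)=e^{-\xi^2}$, one has $\partial_x\Erf(x-y)=e^{-(x-y)^2}$ and $\partial_x e^{-(x-y)^2}=-2(x-y)\,e^{-(x-y)^2}$, so differentiating the proposed expressions with respect to $x$ gives
\[
\partial_x A_n = \big(\partial_x u_n - 2(x-y)\,u_n + v_n\big)\,e^{-(x-y)^2}
\]
and, after collecting the coefficient of $\Erf(x-y)$,
\[
\partial_x B_n = x^n\,\Erf(x-y) \;-\; \frac{\partial_x u_{n+1} - 2(x-y)\,u_{n+1} + v_{n+1} - x^{n+1}}{n+1}\;e^{-(x-y)^2}.
\]
Hence both primitive identities follow immediately once one proves the key relation
\[
\partial_x u_n - 2(x-y)\,u_n + v_n = x^n \qquad (n\in\N_0),
\]
which I will call $(\star_n)$. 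Note the index shift: the claim for $A_n$ uses $(\star_n)$, whereas the claim for $B_n$ uses $(\star_{n+1})$; in fact the reason $B_n$ is built from $u_{n+1}$ and $v_{n+1}$ is precisely that this is what turns its Gaussian part into an exact differential. Because the lemma fixes the constants of integration only implicitly, matching $x$-derivatives is all that is needed.

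It then remains to prove $(\star_n)$, which I would do by induction on $n$. The base cases are immediate: $n=0$ gives $u_0=0$ and $v_0=1$, so the left side is $1=x^0$; and $n=1$ gives $u_1=-\tfrac12$ and $v_1=y$, so the left side is $(x-y)+y=x$. For the inductive step I substitute the recursions for $u_{n+1}$ and $v_{n+1}$ into the left-hand side of $(\star_{n+1})$. Since $\partial_x$ and multiplication by $-2(x-y)$ act linearly, the terms regroup as $y\cdot\big(\partial_x u_n - 2(x-y)u_n + v_n\big) + \tfrac{n}{2}\cdot\big(\partial_x u_{n-1} - 2(x-y)u_{n-1} + v_{n-1}\big)$, plus the elementary remainder $-\tfrac{n}{2}x^{n-1} + (x-y)\,x^n$ contributed by the lone monomial $-\tfrac{x^n}{2}$ in the $u$-recursion. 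By the induction hypothesis the two brackets equal $x^n$ and $x^{n-1}$ respectively, so the expression collapses to $y\,x^n + \tfrac{n}{2}x^{n-1} - \tfrac{n}{2}x^{n-1} + (x-y)\,x^n = x^{n+1}$, which is $(\star_{n+1})$.

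Finally, the ancillary assertions, namely $u_n\in\Q[x,y]$ of degree $n-1$ and $v_n\in\Q[y]$ of degree $n$, I would settle by the same induction straight from the recursions: multiplying by $y$ raises the total degree by one, the added monomials match that degree, and no leading-term cancellation occurs in the $v$-recursion (the $v_n$ even come out monic). I do not expect a genuine obstacle anywhere; every step is elementary differentiation or polynomial bookkeeping. The only point that calls for attention is the index shift in $B_n$, and the one conceptual observation worth isolating is that the right object to induct on is $(\star_n)$ itself rather than the two closed forms separately.
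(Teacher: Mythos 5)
Your proof is correct and is essentially the paper's own argument in lightly repackaged form: your identity $(\star_n)$ is precisely the coefficient of $e^{-(x-y)^2}$ in $\partial_x A_n$, so your induction on $(\star_n)$ coincides with the paper's inductive verification of $\partial_x A_n = x^n e^{-(x-y)^2}$ via the combined recursion $A_{n+1}=yA_n+\tfrac n2 A_{n-1}-\tfrac{x^n}{2}e^{-(x-y)^2}$, and your index shift for $B_n$ matches the paper's formula $B_n=\tfrac{x^{n+1}}{n+1}\Erf(x-y)-\tfrac1{n+1}A_{n+1}$.
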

\begin{proof} The recursions for $u_n$ and $v_n$ combine into the single one
\begin{subequations}
\begin{align}
A_{n+1}(x,y) &= y A_n(x,y) + \frac{n}{2}A_{n-1}(x,y) - \frac{x^n}{2} e^{-(x-y)^2},\label{eq:A}\\*[1mm]
B_n(x,y) &= \frac{x^{n+1}}{n+1}\Erf(x-y) - \frac{1}{n+1} A_{n+1}(x,y),
\end{align}
\end{subequations}
with initial value $A_0(x,y) = \Erf(x-y)$. Now a straightforward direct calculation confirms inductively, for $n=0,1,2,\ldots$\,, that
\[
\partial_x A_n(x,y)=x^ne^{-(x-y)^2},\qquad \partial_x B_n(x,y)=x^n \Erf(x-y).
\]
Note that since $u_{-1}(x,y)$, $v_{-1}(y)$ and $A_{-1}(x,y)$ are multiplied by $n/2$ with $n=0$ in their recursions, any value for them will do.
\qed
\end{proof}

A concrete example of the use of this lemma, with $n=4$, can be found in Section~\ref{sect:waldvogel}, following Eq.~\eqref{eq:Fxyz}. We proceed by considering the double indefinite integrals that are the basis for calculating the factor $f(\sigma)$. 

\begin{corollary}\label{cor:f} For $n,m \in\N_0$ there holds
\[
\iint x^n y^m e^{-(x-y)^2} \,dx\,dy = p_{n,m}(x,y)e^{-(x-y)^2} + q_{n,m}(x,y) \Erf(x-y),
\]
where $p_{n,m},\, q_{n,m} \in \Q[x,y]$ are polynomials of degree $n+m$ and $n+m+1$ with
\begin{gather*}
p_{n,m}(x,y) = p_{n,m}^u(x,y) + p_{n,m}^v(x,y),\\*[1mm]
q_{n,m}(x,y) = q_{n,m}^u(x,y) + q_{n,m}^v(x,y),
\end{gather*}
which are obtained by expanding and replacing according to the following rules:
{\small\[
\begin{array}{c|c|c}
& \text{\rm expand} & \text{\rm replace $y^k$ by} \\*[1mm]\hline
\phantom{\Big|} p_{n,m}^u(x,y) \quad & \quad u_n(x,y)y^m \quad & \quad u_k(y,x)\\*[1mm]
\phantom{\Big|} q_{n,m}^u(x,y) \quad & \quad u_n(x,y)y^m \quad & \quad -v_k(x)\\*[1mm]
\phantom{\Big|} p_{n,m}^v(x,y) \quad & \quad v_n(y)y^m \quad & \quad \frac{1}{k+1}u_{k+1}(y,x)\\*[1mm]
\phantom{\Big|} q_{n,m}^v(x,y) \quad & \quad v_n(y)y^m \quad & \quad \frac{1}{k+1}(y^{k+1}-v_{k+1}(x))\\*[1mm]
\end{array}
\]}%
\end{corollary}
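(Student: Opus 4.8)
The plan is to carry out the double integral as two successive one-variable integrations, applying Lemma~\ref{lem:AB} (and its mirror image, obtained simply by renaming variables) at each stage, and then to read off the four table rules by collecting the coefficients of $e^{-(x-y)^2}$ and $\Erf(x-y)$. First I would integrate in $x$: by the lemma,
\[
\int x^n e^{-(x-y)^2}\,dx = A_n(x,y) = u_n(x,y)\,e^{-(x-y)^2} + v_n(y)\,\Erf(x-y),
\]
so multiplying by $y^m$ and integrating in $y$ splits the double primitive into
\[
\int u_n(x,y)\,y^m\, e^{-(x-y)^2}\,dy \;+\; \int v_n(y)\,y^m\,\Erf(x-y)\,dy .
\]
For the first of these I would expand the polynomial $u_n(x,y)\,y^m = \sum_k c_k(x)\,y^k$ with $c_k\in\Q[x]$, and for the second I would expand $v_n(y)\,y^m = \sum_k d_k\,y^k$ with $d_k\in\Q$.

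Second, I would evaluate the two remaining $y$-integrals by applying Lemma~\ref{lem:AB} with the names of $x$ and $y$ interchanged, legitimate since the lemma holds for arbitrary variable names, together with the identities $e^{-(y-x)^2}=e^{-(x-y)^2}$ and $\Erf(y-x)=-\Erf(x-y)$. This yields
\[
\int y^k e^{-(x-y)^2}\,dy = u_k(y,x)\,e^{-(x-y)^2} - v_k(x)\,\Erf(x-y)
\]
and, via $\int y^k\Erf(x-y)\,dy = -B_k(y,x)$,
\[
\int y^k \Erf(x-y)\,dy = \frac{u_{k+1}(y,x)}{k+1}\,e^{-(x-y)^2} + \frac{y^{k+1}-v_{k+1}(x)}{k+1}\,\Erf(x-y).
\]
Substituting the first identity termwise into $\sum_k c_k(x)y^k$ produces $\sum_k c_k(x)u_k(y,x)$ as the coefficient of $e^{-(x-y)^2}$, which is exactly ``expand $u_n(x,y)y^m$, replace $y^k$ by $u_k(y,x)$'', i.e.\ $p^u_{n,m}$, and $-\sum_k c_k(x)v_k(x)$ as the coefficient of $\Erf(x-y)$, i.e.\ $q^u_{n,m}$. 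Substituting the second identity termwise into $\sum_k d_k y^k$ produces $p^v_{n,m}$ and $q^v_{n,m}$ in the same way. Adding the two halves gives $p_{n,m}=p^u_{n,m}+p^v_{n,m}$ and $q_{n,m}=q^u_{n,m}+q^v_{n,m}$, as claimed.

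It then remains to check the degree statements, which follow from the degree data $\deg u_n = n-1$, $\deg v_n = n$ of Lemma~\ref{lem:AB} (with $v_n$ monic). In $u_n(x,y)y^m=\sum_k c_k(x)y^k$ one has $k\le n+m-1$ and $\deg c_k\le n+m-1-k$, so each $c_k(x)u_k(y,x)$ has degree $\le n+m-2$ and each $c_k(x)v_k(x)$ degree $\le n+m-1$; in $v_n(y)y^m=\sum_k d_k y^k$ the top index is $k=n+m$ with $d_{n+m}\ne 0$, contributing $u_{n+m+1}(y,x)$ of degree $n+m$ to $p^v_{n,m}$ and $y^{n+m+1}-v_{n+m+1}(x)$ of degree $n+m+1$ to $q^v_{n,m}$. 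Hence $\deg p_{n,m}=n+m$ and $\deg q_{n,m}=n+m+1$.

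The only real obstacle is bookkeeping discipline. The operations ``expand $\ldots$ and replace $y^k$ by $\ldots$'' are \emph{not} ring homomorphisms---the image of $y^k$ is not a power of $y$---so they must be understood as applied to the monomial expansion of one fixed polynomial; and the sign flips caused by the oddness of $\Erf$ and by the symmetry $e^{-(x-y)^2}=e^{-(y-x)^2}$ must be tracked without a slip. Beyond that there is no analytic content: everything reduces to the already proven Lemma~\ref{lem:AB} and elementary relabelling, and the constants of integration are pinned down, as in the lemma, implicitly by the stated formulae.
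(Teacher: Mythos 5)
Your proposal is correct and follows essentially the same route as the paper's proof: apply Lemma~\ref{lem:AB} first in $x$, then expand $u_n(x,y)y^m$ and $v_n(y)y^m$ into powers of $y$ and apply the lemma again with the roles of $x$ and $y$ interchanged (using the oddness of $\Erf$), collecting the coefficients of $e^{-(x-y)^2}$ and $\Erf(x-y)$ to obtain the table. Your degree bookkeeping is just a more explicit version of the paper's one-line remark that the degrees follow from those of $u_n$ and $v_n$.
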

\begin{proof}
Using Lemma~\ref{lem:AB} we calculate a primitive, written as a  double integral,
\[
\iint x^n y^m e^{-(x-y)^2} \,dx\,dy = \int u_n(x,y)y^m e^{-(x-y)^2}\,dy  - \int v_n(y)y^m \Erf(y-x)\,dy.
\]
By expanding $u_n(x,y)y^m$ in the first integral into powers of $y$, another application of Lemma~\ref{lem:AB} 
shows that $y^k$ contributes
\[
\int y^k e^{-(x-y)^2}\,dy = A_k(y,x) = u_k(y,x) e^{-(x-y)^2} - v_k(x) \Erf(x-y)
\]
whereas expanding $v_n(y)y^m$ in the second integral shows that $y^k$ contributes
\begin{multline*}
- \int y^k \Erf(y-x)\,dy = -B_k(y,x) \\*[1mm]= \frac{1}{k+1} u_{k+1}(y,x) e^{-(x-y)^2} +\frac{1}{k+1}\left(y^{k+1}-v_{k+1}(x)\right)\Erf(x-y).
\end{multline*}
Combining the coefficients of $e^{-(x-y)^2}$ and $\Erf(x-y)$ gives the table. The claim about the polynomial degrees follows from the degrees of $u_n$ and $v_n$. \qed 
\end{proof}
Now, by substitution and rescaling we get the primitive
\begin{multline}\label{eq:fsigma}
f(\sigma;x,y):=\iint x^n y^m e^{-\sigma^2(x-y)^2} \,dx\,dy\\*[1mm] = \sigma^{-n-m-2} \left(p_{n,m}(\sigma x, \sigma y)e^{-\sigma^2(x-y)^2} + q_{n,m}(\sigma x, \sigma y) \Erf(\sigma (x-y))\right).
\end{multline}
Using this primitive we calculate the definite integral $f(\sigma)$ defined in \eqref{eq:F} as
\[
f(\sigma) = f(\sigma;b,b')-f(\sigma;a,b')-f(\sigma;b,a') + f(\sigma;a,a').
\]
Algorithmically, with the integral bounds $a,b,a',b'$ handled as variables, the construction of this expression for $f(\sigma)$ by means of Corollary~\ref{cor:f} uses simply some arithmetic of rational polynomials. Structurally, we get:
\begin{lemma}\label{lem:F}
The factor 
\[
f(\sigma)= \int_{a}^{b} \int_{a'}^{b'} x^{n}y^{m} e^{-\sigma^2(x-y)^2} \,dy\,dx
\]
is a linear combination of terms of the form\footnote{Since $\Erf(0)=0$, the second one can be dropped if $\delta=0$.}
\begin{equation}\label{eq:termF}
\sigma^{-2k} e^{-\sigma^2 \delta^2} \quad\text{and}\quad \sigma^{-(2k-1)}\Erf(\sigma\delta),
\end{equation}
where $\delta\in\Delta:=\big\{b-b',b-a',a-b',a-a'\big\}$ and the exponent $\nu$ of $\sigma^{-1}$, namely $\nu=2k$ in the first case and $\nu=2k-1$ in the second, is restricted to 
\[
1\leq \nu \leq n+m+2.
\]
The coefficient of a term belonging to $\delta = c-c'$ with $c\in\{a,b\}$ and $c'\in\{a',b'\}$ is a homogeneous polynomial of degree $n+m+2-\nu$ in $\Q[c,c']$. 
\end{lemma}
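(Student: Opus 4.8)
\emph{Proof plan.} The plan is to combine the explicit double primitive~\eqref{eq:fsigma} with the four-corner decomposition
\[
f(\sigma) = f(\sigma;b,b') - f(\sigma;a,b') - f(\sigma;b,a') + f(\sigma;a,a'),
\]
and to split everything into homogeneous pieces. For a corner $(c,c')$ with $c\in\{a,b\}$ and $c'\in\{a',b'\}$, formula~\eqref{eq:fsigma} reads
\[
f(\sigma;c,c') = \sigma^{-n-m-2}\bigl(p_{n,m}(\sigma c,\sigma c')\,e^{-\sigma^2\delta^2} + q_{n,m}(\sigma c,\sigma c')\,\Erf(\sigma\delta)\bigr),\qquad \delta:=c-c',
\]
so the four corners realise exactly the four values $\delta\in\Delta$. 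Writing $p_{n,m}=\sum_j P_j$ and $q_{n,m}=\sum_j Q_j$ as sums of their homogeneous components of degree $j$ and using $P_j(\sigma c,\sigma c')=\sigma^j P_j(c,c')$ (and likewise for $Q_j$), each corner expands into $\sum_j \sigma^{-(n+m+2-j)}(P_j(c,c')\,e^{-\sigma^2\delta^2} + Q_j(c,c')\,\Erf(\sigma\delta))$, whose coefficients $P_j(c,c')$ and $Q_j(c,c')$ are homogeneous of degree $j$ in $\Q[c,c']$. Setting $\nu:=n+m+2-j$, this is already the asserted homogeneity of degree $n+m+2-\nu$; and since the terms are grouped by the value of $\delta$ (and, treating $a,b,a',b'$ as indeterminates, the four members of $\Delta$ are formally distinct), forming the signed sum over the four corners does not disturb it.

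The range of $\nu$ is then read off from Corollary~\ref{cor:f}: since $\deg p_{n,m}=n+m$ and $\deg q_{n,m}=n+m+1$, one has $0\le j\le n+m$ for the $e^{-\sigma^2\delta^2}$ terms and $0\le j\le n+m+1$ for the $\Erf$ terms, hence $2\le\nu\le n+m+2$ in the first case, $1\le\nu\le n+m+2$ in the second, and thus the uniform bound $1\le\nu\le n+m+2$.

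The one point that requires a genuine argument --- and the main obstacle --- is the parity built into~\eqref{eq:termF}: the exponential terms must carry only even powers $\sigma^{-2k}$ and the $\Erf$ terms only odd powers $\sigma^{-(2k-1)}$, equivalently every monomial of $p_{n,m}$ has total degree $\equiv n+m\pmod 2$ and every monomial of $q_{n,m}$ has total degree $\equiv n+m+1\pmod 2$. I would establish this in two stages. First, an induction on $n$ using the recursions of Lemma~\ref{lem:AB} shows that every monomial of $u_n(x,y)$ has total degree $\equiv n-1\pmod 2$ and every monomial of $v_n(y)$ has degree $\equiv n\pmod 2$: the three summands $y\,u_n$, $\tfrac{n}{2}\,u_{n-1}$, $-\tfrac{x^n}{2}$ defining $u_{n+1}$ all consist of monomials of degree $\equiv n\pmod 2$, and the two summands defining $v_{n+1}$ of monomials of degree $\equiv n+1\pmod 2$. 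Second, I would carry this parity through the four replacement rules of Corollary~\ref{cor:f}: a monomial of $u_n(x,y)y^m$ has total degree $\equiv n+m-1\pmod 2$ and a monomial of $v_n(y)y^m$ has total degree $\equiv n+m\pmod 2$, while replacing the power $y^k$ in such a monomial by $u_k(y,x)$, by $-v_k(x)$, by $\tfrac{1}{k+1}u_{k+1}(y,x)$, or by $\tfrac{1}{k+1}(y^{k+1}-v_{k+1}(x))$ changes the total degree of each resulting monomial, modulo $2$, by $-1$, $0$, $0$, $+1$, respectively. Hence $p_{n,m}^u$, $q_{n,m}^u$, $p_{n,m}^v$, $q_{n,m}^v$ acquire uniform parities $\equiv n+m$, $n+m+1$, $n+m$, $n+m+1\pmod 2$, so that $p_{n,m}=p_{n,m}^u+p_{n,m}^v$ and $q_{n,m}=q_{n,m}^u+q_{n,m}^v$ have the claimed parities. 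Everything else --- collecting homogeneous components and tracking coefficients --- is routine, and since $\Erf(0)=0$ the $\Erf$ terms attached to a vanishing $\delta$ drop out, as noted in the footnote to~\eqref{eq:termF}.
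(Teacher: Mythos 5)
Your argument is correct, and the first half (the four-corner decomposition of $f(\sigma)$ via \eqref{eq:fsigma}, the homogeneity of the coefficients obtained by splitting $p_{n,m},q_{n,m}$ into homogeneous components, and the range $1\leq\nu\leq n+m+2$ read off from the degrees in Corollary~\ref{cor:f}) coincides with what the paper subsumes under ``by construction''. Where you genuinely diverge is the parity restriction, which is indeed the only point needing a real argument. The paper settles it globally: $f(\sigma)$ is an even function of $\sigma$, and by linear independence of the Gaussians $e^{-\sigma^2\delta^2}$ (even) and the error functions $\Erf(\sigma\delta)$ (odd), indexed by the distinct values of $|\delta|$, over the field of rational functions, any term of the wrong parity must carry a zero coefficient --- so such terms may a priori be generated but are discarded. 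You instead prove constructively that they are never generated: a two-step induction through the recursions of Lemma~\ref{lem:AB} gives the uniform parities $\deg\equiv n-1$ for the monomials of $u_n$ and $\deg\equiv n$ for those of $v_n$, and tracking these parities through the four replacement rules of Corollary~\ref{cor:f} (degree shifts $-1,0,0,+1$ modulo $2$, which I checked are right) shows that all monomials of $p_{n,m}$ have degree $\equiv n+m$ and all monomials of $q_{n,m}$ degree $\equiv n+m+1\pmod 2$, whence only the exponents $\nu=2k$ and $\nu=2k-1$ of \eqref{eq:termF} can occur. Your route costs a longer bookkeeping induction but avoids the linear-independence input and proves the slightly stronger statement that the algorithm itself, run symbolically, never produces forbidden terms (rather than producing them with coefficients that must vanish); the paper's evenness argument is shorter and also explains, in one stroke, why the restriction is forced by symmetry alone, independently of the particular recursions used to build $p_{n,m}$ and $q_{n,m}$.
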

Note that Lemma~\ref{lem:F} enumerates $(n+m+2)\cdot\#\Delta$ possible terms. In concrete cases, by partial symmetries,  some of them may vanish, cancel, or combine.
\begin{proof}
By construction, $f(\sigma)$ is a linear combination of terms of the form
\[
\sigma^{-\nu} e^{-\sigma^2 \delta^2}, \qquad \sigma^{-\nu}\Erf(\sigma\delta),
\]
with $\nu=1,\ldots,n+m+2$ and $\delta$ taken from the set $\Delta$ of differences of the integral bounds; the coefficients are of the stated form. Since $f(\sigma)$ is an even function of $\sigma$, by linear independence of the even Gaussians and the odd error functions (listed by different values of the parameter $|\delta|$) over the field of rational functions, only terms that are themselves even functions of $\sigma$ can have non-zero coefficients. Hence it suffices to take only those exponents $\nu$ into account which are even in the first case and odd in the second.\qed
\end{proof}
{\small\begin{remark}
Though both forms of the terms in {\rm \eqref{eq:termF}} behave as $\propto \sigma^{-2k}$ close to $\sigma=0$, all those singularities must cancel since 
obviously, as $\sigma\to 0$,
\[
f(\sigma) = \int_{a}^{b} \int_{a'}^{b'} x^{n}y^{m} \,dy\,dx + O(\sigma^2).
\]
\end{remark}}%

\begin{example}\label{ex:1} We consider two specific cases of the integral
\[
\int_a^b\int_{a'}^{b'} x y^2  e^{-\sigma^2(x-y)^2}\,dy\,dx.
\]
Here, the exponents $\nu$ of $\sigma^{-1}$ are restricted to $1\leq \nu \leq 5$. 
The first specific case, with difference set $\Delta=\{1,2,3\}$, exhibits all of the 15 possible terms enumerated in Lemma~\ref{lem:F}:
\begin{multline}\label{eq:f15}
\int_2^3\int_0^1 x y^2  e^{-\sigma^2(x-y)^2}\,dy\,dx =
-\frac{\Erf(\sigma )}{16 \sigma
   ^5}+\frac{\Erf(2 \sigma )}{8
   \sigma ^5}-\frac{\Erf(3 \sigma
   )}{16 \sigma ^5}\\*[1mm]
   +\frac{
   \Erf(\sigma )}{\sigma ^3}-\frac{13
   \Erf(2 \sigma )}{4 \sigma
   ^3}+\frac{9  \Erf(3 \sigma )}{4
   \sigma ^3}
   +\frac{15 \Erf(\sigma
   )}{4 \sigma }-\frac{24  \Erf(2
   \sigma )}{\sigma }+\frac{81
   \Erf(3 \sigma )}{4 \sigma }\\*[1mm]
   +\frac{e^{-\sigma ^2}}{16 \sigma ^4}
   -\frac{e^{-4 \sigma ^2}}{4 \sigma ^4}
   +\frac{3 e^{-9 \sigma ^2}}{16 \sigma ^4}
   +\frac{15 e^{-\sigma ^2}}{8 \sigma ^2}
   -\frac{6 e^{-4 \sigma ^2}}{\sigma ^2}
   +\frac{27 e^{-9 \sigma ^2}}{8 \sigma ^2}.
\end{multline}
The singularities of those 12 terms that behave as $\propto\sigma^{-2}$ and $\propto\sigma^{-4}$ close to $\sigma=0$ must cancel since, as $\sigma\to 0$,
\[
 \int_2^3\int_0^1 x y^2 \, e^{-\sigma^2(x-y)^2}\,dy\,dx = \frac{5}{6} + O(\sigma^2).
 \]
In the second specific case, with  the symmetric difference set $\Delta=\{-1,0,1\}$, fewer terms than enumerated in Lemma~\ref{lem:F} have non-zero coefficients (there are no terms with $\sigma^{-4}$ and $\sigma^{-5}$):
\[
\int_0^1\int_0^1 x y^2  e^{-\sigma^2(x-y)^2}\,dy\,dx =\frac{\Erf(\sigma )}{4 \sigma
   ^3}+\frac{\Erf(\sigma )}{2 \sigma
   }-\frac{1}{2
   \sigma ^2}+\frac{e^{-\sigma ^2}}{4 \sigma ^2}.
  \]
Once again, the singularities at $\sigma=0$ cancel since
\[
\int_0^1\int_0^1 x y^2  e^{-\sigma^2(x-y)^2}\,dy\,dx = \frac{1}{6} + O(\sigma^2)\qquad (\sigma\to 0).
\]
\end{example}

%From \eqref{eq:erfasymp} there follows an easy algorithmic way to obtain the asymptotics of $f(\sigma)$ as $\sigma\to\infty$\,:
%
%\begin{corollary} Let $r(\sigma)$ be the rational function obtained by the replacements 
%\[
%\Erf(\sigma\delta)\mapsto \frac{\sqrt{\pi}}{2}, \qquad e^{-\sigma^2\delta^2} \mapsto 0
%\]
%of the terms with $\delta\neq 0$ in the representation of $f(\sigma)$ given in Lemma~\ref{lem:F}. Then
%\[
%f(\sigma) = r(\sigma) + O\big(\sigma^{-2} e^{-\sigma^2\delta_{\min}}\big) \qquad (\sigma\to\infty)
%\]
%where $\delta_{\min}^2 =\min_{\delta \neq 0} \delta^2$ over $\delta\in\big\{b'-b,b'-a,a'-b,a'-a\big\}$.
%\end{corollary}
%
%\begin{example} For the integrals from Example~\ref{ex:1} we obtain, as $\sigma\to\infty$, 
%\[
%\int_0^1\int_2^3 x^2 y \, e^{-\sigma^2(x-y)^2}\,dy\,dx = O\big(\sigma^{-2} e^{-\sigma^2}\big)
%\] 
%and
%\[
%\int_0^1\int_0^1 x^2 y \, e^{-\sigma^2(x-y)^2}\,dy\,dx = \frac{\sqrt{\pi }}{8 \sigma ^3}-\frac{1}{2 \sigma
%   ^2}+\frac{\sqrt{\pi }}{4 \sigma } + O\big(\sigma^{-2} e^{-\sigma^2}\big).
%\] 
%Note that in the second case the non-integrability of the term $\propto\sigma^{-1}$ at $\sigma\to\infty$ corresponds to the divergence of the Newton potential in 1D for equal intervals:
%\[
%\int_0^1\int_0^1 \frac{x^2 y}{|x-y|} \,dx\,dy = \infty.
%\]
%\end{example}

\section{Evaluation of the Potential}\label{sect:V}

By \eqref{eq:V} the Newton potential \eqref{eq:Vvanilla} is given as
\[
V = \frac{2}{\sqrt\pi} \int_0^\infty g(\sigma)\,d\sigma, \qquad g(\sigma):= f_1(\sigma)f_2(\sigma)f_3(\sigma).
\]
Lemma~\ref{lem:F} shows that $g(\sigma)$ is a linear combination of terms of the form
\begin{subequations}\label{eq:origerfterms}
\begin{gather}
\sigma^{-(2\kappa+4)} e^{-\sigma^2(\delta_1^2+\delta_2^2+\delta_3^2)},\\*[1mm]
\sigma^{-(2\kappa+3)} e^{-\sigma^2(\delta_1^2+\delta_2^2)}\Erf(\sigma\delta_3),\\*[1mm]
\sigma^{-(2\kappa+2)} e^{-\sigma^2\delta_1^2}\Erf(\sigma\delta_2)\Erf(\sigma\delta_3),\\*[1mm]
\sigma^{-(2\kappa+1)} \Erf(\sigma\delta_1)\Erf(\sigma\delta_2)\Erf(\sigma\delta_3), \label{eq:nosing}
\end{gather}
\end{subequations}
where $\kappa\in\N$ and the triple $(\delta_1,\delta_2,\delta_3)$ is, up to a permutation, the component-wise difference between the coordinates of a vertex of $Q$ and a vertex of $Q'$.

All of these terms are integrable at $\sigma\to\infty$, but none of them (except the last one with $\kappa=1$) is integrable at $\sigma\to 0$. So a direct, term-wise integration of $g(\sigma)$ is bound to fail. Yet, those singularities must cancel since, as $\sigma\to0$,
\[
g(\sigma) = \iiint\limits_Q \iiint\limits_{Q'}x_1^{n_1}x_2^{n_2}x_3^{n_3} y_1^{m_1}y_2^{m_2}y_3^{m_3} \,dy_1dy_2dy_3\,\,dx_1dx_2dx_3 \;+\; O(\sigma^2).
\]
So, to remove the cancelling singularities, some kind of renormalization is called for. We write
\[
V = \lim_{\epsilon\to 0^+}\frac{2}{\sqrt\pi} \int_{\epsilon}^\infty g(\sigma)\,d\sigma
\]
and apply, following an idea explored by Trott~\cite{Trott}, repeated integration by parts as long as there are any singular terms left in the reminder integrals. 

Specifically, if there is sufficient decay at $\sigma\to\infty$, we have 
\[
\int_\epsilon^\infty \sigma^{-(\nu+1)}h(\sigma)\,d\sigma = \nu^{-1}\epsilon^{-\nu} h(\epsilon) + \nu^{-1}\int_\epsilon^\infty \sigma^{-\nu}h'(\sigma) \,d\sigma \quad (\nu\in\N, \epsilon>0).
\]
Since the functions $h(\sigma)$ will be analytic at $\sigma=0$, the sum of all the boundary terms must cancel in the end: we therefore do not calculate them in the first place. This way, by repeated application of the replacement rule
\[
\sigma^{-(\nu+1)} h(\sigma) \;\mapsto\; \nu^{-1} \sigma^{-\nu}h'(\sigma)\qquad (\nu\in\N),
\]
until no term is left to be replaced, the integrand $g(\sigma)$ is eventually transformed into a renormalized one, denoted by $\tilde g(\sigma)$, such that
\begin{equation}\label{eq:Vrenorm}
V = \frac{2}{\sqrt\pi} \int_0^\infty \tilde g(\sigma)\,d\sigma
\end{equation}
and the integral can, finally, be evaluated term-wise. 

Because the derivatives of $\exp(-\sigma^2\rho^2)$ and $\Erf(\sigma\delta)$ wrt $\sigma$ are
$-2\rho^2 \sigma e^{-\sigma^2\rho^2}$ and $\delta e^{-\sigma^2\delta^2}$, these replacement rules do not introduce any new forms of terms; for $k=1,2,\ldots$\,, there are effectively just four rules:
\begin{subequations}
\begin{multline}\label{eq:rule1}
\sigma^{-2k} e^{-\sigma^2(\delta_1^2+\delta_2^2+\delta_3^2)} \\*[1mm]\;\mapsto\; -\frac{2 (\delta_1^2+\delta_2^2+\delta_3^2)}{2 k-1}\cdot \sigma
   ^{-(2 k-2)}e^{-\sigma ^2(\delta_1^2+\delta_2^2+\delta_3^2) },
\end{multline}
\begin{multline}\label{eq:rule2}
\sigma^{-(2k+1)} e^{-\sigma^2(\delta_1^2+\delta_2^2)}\Erf(\sigma\delta_3)\\*[1mm]
\qquad\qquad \quad\; \;\mapsto\;
   -\frac{ 2(\delta_1^2+\delta_2^2)}{2k}\cdot \sigma ^{-(2 k-1)} e^{-\sigma^2(\delta_1^2+\delta_2^2)} \Erf(\sigma \delta _3)\\*[1mm]
+\frac{\delta _3 }{2k}\cdot \sigma ^{-2
   k} e^{-\sigma^2(\delta_1^2+\delta_2^2+\delta_3^2)},
\end{multline}
\begin{multline}\label{eq:rule3}
\sigma^{-2k} e^{-\sigma^2\delta_1^2}\Erf(\sigma\delta_2)\Erf(\sigma\delta_3)\\*[1mm]
 \;\mapsto\;
 -\frac{2   \delta_1^2
    }{2 k-1}\cdot\sigma ^{-(2
   k-2)}e^{-\sigma^2\delta_1^2} \Erf(\sigma\delta _2)\Erf(\sigma\delta _3)\\*[1mm]
\qquad\qquad \;\; +\frac{\delta _2}{2 k-1}\cdot\sigma ^{-(2 k-1)} e^{-\sigma^2(\delta_1^2+\delta_2^2)} \Erf(\sigma\delta _3)\\*[1mm]
 +\frac{\delta _3}{2 k-1}\cdot\sigma ^{-(2 k-1)} e^{-\sigma^2(\delta_1^2+\delta _3^2)}\Erf(\sigma\delta _2),
\end{multline}
\begin{multline}\label{eq:rule4}
\sigma^{-(2k+1)} \Erf(\sigma\delta_1)\Erf(\sigma\delta_2)\Erf(\sigma\delta_3)\\*[1mm]
  \;\mapsto\; \sum_\text{cyc:$\delta$} \frac{ \delta _1 }{2k}\cdot  \sigma ^{-2 k} e^{-\sigma^2\delta _1^2}\Erf(\sigma\delta _2) \Erf(\sigma\delta _3).
\end{multline}
\end{subequations}
We note that these rules generate terms that either vanish or feature a Gaussian factor of the form $e^{-\sigma^2\rho^2}$ with $\rho\neq 0$. Therefore, dropping the vanishing terms, we can arrange for another permutation of $(\delta_1,\delta_2,\delta_3)$ such that $\delta_1\neq0$ and the terms take one of the standard forms shown in Lemma~\ref{lem:G} below.

This way, using \eqref{eq:erfasymp}, we readily check that the resulting terms keep being integrable at $\sigma\to\infty$ and the corresponding boundary terms vanish there. 
Once again, all the replacement steps are performed just using the arithmetic of rational polynomials. To summarize, we have thus proven constructively:
\begin{lemma}\label{lem:G} The recursive application of the replacement rules \eqref{eq:rule1}--\,\eqref{eq:rule4}, until there is no term left for input, yields a renormalized integrand $\tilde g(\sigma)$ that satisfies \eqref{eq:Vrenorm}. It  is a linear combination of terms of the form
\[
e^{-\sigma^2(\delta_1^2+\delta_2^2+\delta_3^2)},\quad \sigma^{-1}e^{-\sigma^2(\delta_1^2+\delta_2^2)} \Erf(\sigma\delta_3), \quad \delta_1 e^{-\sigma^2\delta_1^2}\Erf(\sigma\delta_2)\Erf(\sigma\delta_3),
\]
where $(\delta_1,\delta_2,\delta_3)$ satisfies $\delta_1\neq 0$ and is, up to a permutation, the component-wise difference between the coordinates of a vertex of $Q$ and a vertex of $Q'$; the coefficients of the terms are rational polynomials in the integral bounds.
\end{lemma}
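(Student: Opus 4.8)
The plan is to run the renormalization algorithm sketched above and to verify three points: that it is well posed (the four replacement rules are exhaustive and the process terminates), that it preserves the stated structure of the terms, and that the boundary contributions deliberately dropped at $\sigma=\epsilon$ really vanish as $\epsilon\to0^+$. For the starting point, Lemma~\ref{lem:F} gives each factor $f_j(\sigma)$ as a finite linear combination --- with coefficients in $\Q$ times polynomials in the bounds --- of the monomials $\sigma^{-2k}e^{-\sigma^2\delta^2}$ and $\sigma^{-(2k-1)}\Erf(\sigma\delta)$, $\delta$ ranging over the four differences of the $j$th bounds of $Q$ and $Q'$. Multiplying out $g=f_1f_2f_3$ and grouping the products by how many factors are of Gaussian type produces exactly the four forms \eqref{eq:origerfterms}, with the $j$th component of $(\delta_1,\delta_2,\delta_3)$ coming from $f_j$; as choosing one bound per coordinate is the same as choosing a vertex of $Q$ and a vertex of $Q'$, the vertex-difference description holds from the outset and is preserved by every later step, since the rules \eqref{eq:rule1}--\eqref{eq:rule4} merely recombine the three coordinate slots and insert or delete Gaussian and error-function factors referring to the same $\delta_j$. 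The same bookkeeping keeps the coefficients in $\Q[\text{bounds}]$, as the rules only multiply by $\delta_j$, by $\delta_j^2+\delta_{j'}^2$, or by explicit rationals.

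For well-posedness, differentiating $e^{-\sigma^2\rho^2}$ and $\Erf(\sigma\delta)$ and collecting terms shows that \eqref{eq:rule1}--\eqref{eq:rule4} are exactly the images of the four input types under the integration-by-parts replacement $\sigma^{-(\nu+1)}h(\sigma)\mapsto\nu^{-1}\sigma^{-\nu}h'(\sigma)$, and that their right-hand sides only produce term types that are again among these inputs or among the three forms of the conclusion (the cases where the explicit power of $\sigma^{-1}$ has dropped to $0$ or $-1$). Every rule strictly lowers the largest explicit negative power of $\sigma$, a non-negative integer, so the process halts at a $\tilde g(\sigma)$ that is a linear combination of the three advertised forms. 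Dropping the summands with vanishing coefficient --- precisely those whose controlling $\delta_j$ is zero, as the rules have multiplied them by $\delta_j$ or $\delta_j^2+\delta_{j'}^2$ --- one permutes the coordinate slots of each surviving term so that $\delta_1\neq0$, which also forces its surviving Gaussian factors $e^{-\sigma^2\rho^2}$ to have $\rho\neq0$. Then $\tilde g$ is integrable on $(0,\infty)$: it is analytic at $0$ since $\sigma^{-1}\Erf(\sigma\delta_3)=\delta_3+O(\sigma^2)$, and it decays faster than any power at $\infty$, which also makes the boundary terms at $\sigma=\infty$ in all the integrations by parts vanish.

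The crux is the vanishing of the boundary terms recorded at $\sigma=\epsilon$. Each such step not only records a boundary term $\nu^{-1}\epsilon^{-\nu}h(\epsilon)$ but also changes the integrand by an exact derivative, $\sigma^{-(\nu+1)}h-\nu^{-1}\sigma^{-\nu}h'=\tfrac{d}{d\sigma}\big(-\nu^{-1}\sigma^{-\nu}h\big)$, the recorded term being minus this antiderivative at $\epsilon$. Telescoping over all steps gives $g-\tilde g=\Psi'$ on $(0,\infty)$, where $\Psi$ is the sum of the antiderivatives $-\nu^{-1}\sigma^{-\nu}h$, so $\int_\epsilon^\infty g\,d\sigma=\int_\epsilon^\infty\tilde g\,d\sigma-\Psi(\epsilon)$ because $\Psi(\infty)=0$. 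Now $g$ is entire in $\sigma$ (expand the Gaussian in the definition of $f_j$ and integrate term by term) and $\tilde g$ is analytic at $0$, so $\Psi'$ is analytic at $0$; as $\Psi$ is a priori only meromorphic at $0$, this forces it to be analytic there. Finally, every term $\sigma^{-(\nu+1)}h(\sigma)$ fed to the rules is an \emph{even} function of $\sigma$ (inherited from $g$ and preserved by the rules), so $h$ has parity $(-1)^{\nu+1}$ and each $\sigma^{-\nu}h(\sigma)$ is \emph{odd}; hence $\Psi$ is odd, and an odd function analytic at $0$ vanishes at $0$. Letting $\epsilon\to0^+$ and invoking $V=\tfrac{2}{\sqrt\pi}\int_0^\infty g\,d\sigma$ from \eqref{eq:V} yields \eqref{eq:Vrenorm}.

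I expect the genuine obstacle to be this last step --- showing that the running integrands stay even and that the cumulative antiderivative $\Psi$ is globally well defined and analytic at the origin --- the remainder being a finite if tedious verification of the four explicit rules and of termination. A second point needing care is the degenerate case of triples with one or more vanishing components: one must check that the would-be non-integrable leftovers (a constant, or $\sigma^{-1}\Erf(\sigma\delta_3)$ with its non-decaying tail) always arise with a coefficient the rules have already annihilated. \qed
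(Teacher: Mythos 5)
Your proof is correct and follows essentially the same constructive route as the paper: expand $g=f_1f_2f_3$ via Lemma~\ref{lem:F} into the forms \eqref{eq:origerfterms}, apply the rules \eqref{eq:rule1}--\eqref{eq:rule4}, and track termination, the surviving forms with nonvanishing Gaussian exponent, and the polynomial coefficients. Your telescoping antiderivative $\Psi$ together with the parity/analyticity argument for $\Psi(\epsilon)\to0$ as $\epsilon\to0^+$ is in fact a more explicit justification than the paper's brief remark that, by analyticity of the $h(\sigma)$, the boundary terms "must cancel in the end."
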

Now, term-wise integration is actually simple and results in elementary closed expressions. Take $\rho >0$ and $\delta_1\neq 0$. First, there is \eqref{eq:mean}, written in the form
\begin{subequations}
\begin{equation}\label{eq:I1}
\frac{2}{\sqrt{\pi}}\int_0^\infty e^{-\sigma^2\rho^2}\,d\sigma = \frac{1}{\rho},
\end{equation}
whereas, next, a table-look up gives \cite[Eq.~(2.8.5.8)]{MR950173}\footnote{In most of the examples below, the second $\log$ form is the preferred variant.\label{ft:I2}}%
\begin{multline}\label{eq:I2}
\frac{2}{\sqrt{\pi}}\int_0^\infty \sigma^{-1} e^{-\sigma^2\rho^2}\Erf(\sigma\delta)\,d\sigma = \arcsinh\frac{\delta}{\rho}  = \arctanh\frac{\delta}{\sqrt{\rho^2+\delta^2}} \\*[1mm]
= \log\rho - \log\left(- \delta + \sqrt{\rho^2+\delta^2}\,\right) =\log\left(\delta + \sqrt{\rho^2+\delta^2}\,\right) -\log\rho ,
\end{multline}
and \cite[Eq.~(2.8.19.8)]{MR950173}
\begin{equation}\label{eq:I3}
\frac{2}{\sqrt{\pi}}\int_0^\infty e^{-\sigma^2\delta_1^2}\Erf(\sigma\delta_2)\Erf(\sigma\delta_3)\,d\sigma = \frac{1}{2\delta_1}\arctan\frac{\delta_2\delta_3}{\delta_1\sqrt{\delta_1^2+\delta_2^2+\delta_3^2}}.
\end{equation}
\end{subequations}
For a simple proof of \eqref{eq:I2} and \eqref{eq:I3}, see Appendix B. 

To summarize, by Lemma~\ref{lem:G} we get, based on an algorithmic construction doing calculations in the arithmetic of rational polynomials only, the part of the Main Theorem in the Introduction which addresses the potential $V$:
\begin{corollary}\label{cor:V} Writing $\rho_\delta=\sqrt{\delta_1^2+\delta_2^2+\delta_3^2}$, the potential $V$ as defined in \eqref{eq:V} is a linear combination of terms of the form
\begin{equation}\label{eq:Vform}
\frac{1}{\rho_\delta}, \quad  \arctanh\frac{\delta_3}{\rho_\delta}, \quad
\arctan\frac{\delta_2\delta_3}{\delta_1\rho_\delta}, 
\end{equation}
where $(\delta_1,\delta_2,\delta_3)$ satisfies $\delta_1\neq 0$ and is, up to a permutation, the component-wise difference between the coordinates of a vertex of $Q$ and a vertex of $Q'$; the coefficients of the terms are rational polynomials in the integral bounds.
\end{corollary}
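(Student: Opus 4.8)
The plan is to obtain Corollary~\ref{cor:V} essentially for free from the renormalization already performed in Lemma~\ref{lem:G}, feeding its three standard forms into the three elementary integral identities \eqref{eq:I1}--\eqref{eq:I3}. Concretely, I would start from \eqref{eq:Vrenorm}, which writes $V=\tfrac{2}{\sqrt\pi}\int_0^\infty\tilde g(\sigma)\,d\sigma$ with $\tilde g$ a finite linear combination -- the coefficients being rational polynomials in the integral bounds -- of the terms $e^{-\sigma^2(\delta_1^2+\delta_2^2+\delta_3^2)}$, $\sigma^{-1}e^{-\sigma^2(\delta_1^2+\delta_2^2)}\Erf(\sigma\delta_3)$, and $\delta_1e^{-\sigma^2\delta_1^2}\Erf(\sigma\delta_2)\Erf(\sigma\delta_3)$, each tagged by a triple $(\delta_1,\delta_2,\delta_3)$ with $\delta_1\neq0$ that, up to a permutation, is the component-wise difference between the coordinates of a vertex of $Q$ and a vertex of $Q'$.

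Next I would justify term-wise integration. At $\sigma\to\infty$ all three forms decay like a Gaussian (invoking $\Erf(\sigma\delta)=\tfrac{\sqrt\pi}{2}\sign\delta+O(\sigma^{-1}e^{-\sigma^2\delta^2})$ from \eqref{eq:erfasymp}), and at $\sigma\to0$ each is bounded -- the middle one because $\sigma^{-1}\Erf(\sigma\delta_3)=\delta_3+O(\sigma^2)$ -- so each summand is absolutely integrable on $(0,\infty)$ and the integral splits over the terms. Then \eqref{eq:I1} turns the first form into $1/\rho_\delta$; \eqref{eq:I2}, applied with $\rho^2=\delta_1^2+\delta_2^2$ and $\delta=\delta_3$, turns the second into $\arctanh(\delta_3/\rho_\delta)$; and \eqref{eq:I3} turns the third into $\delta_1\cdot\tfrac{1}{2\delta_1}\arctan\tfrac{\delta_2\delta_3}{\delta_1\rho_\delta}=\tfrac12\arctan\tfrac{\delta_2\delta_3}{\delta_1\rho_\delta}$. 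Summing over all terms exhibits $V$ as a linear combination of the three functions in \eqref{eq:Vform}.

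The only point that needs care -- and hence the ``main obstacle'' -- is the parameter and coefficient bookkeeping. On the coefficient side one must observe that the $\delta_1$ prefactor deliberately built into the third form of Lemma~\ref{lem:G} is exactly what cancels the $1/(2\delta_1)$ produced by \eqref{eq:I3}, so that (together with the fact that the replacement rules \eqref{eq:rule1}--\eqref{eq:rule4} only ever multiply coefficients by individual $\delta_j$'s and rational constants) the final coefficients remain rational polynomials in the bounds rather than acquiring denominators; this is also why the hypothesis $\delta_1\neq0$ causes no trouble. On the parameter side one must check that the surviving triples are still, up to a permutation, genuine vertex differences: by Lemma~\ref{lem:F} each factor $f_j$ only contributes $\delta_j\in\{b_j-b_j',\,b_j-a_j',\,a_j-b_j',\,a_j-a_j'\}$, so in $f_1f_2f_3$ the three indices assemble componentwise into $(c_1-c_1',c_2-c_2',c_3-c_3')$ for vertices $(c_1,c_2,c_3)$ of $Q$ and $(c_1',c_2',c_3')$ of $Q'$, and the renormalization rules only permute the roles of the $\delta_j$ within such a triple (and possibly annihilate a term in which too many $\delta_j$ vanish). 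Once these two observations are recorded, Corollary~\ref{cor:V} follows.
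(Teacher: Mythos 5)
Your proposal is correct and follows essentially the same route as the paper: Lemma~\ref{lem:G} supplies the renormalized integrand in the three standard forms, and term-wise application of the table entries \eqref{eq:I1}--\eqref{eq:I3} (with the built-in $\delta_1$ prefactor cancelling the $1/(2\delta_1)$ from \eqref{eq:I3}) yields the three forms in \eqref{eq:Vform} with rational-polynomial coefficients. The extra care you take with integrability and with the vertex-difference bookkeeping is exactly what the paper handles implicitly via \eqref{eq:erfasymp} and the statements of Lemmas~\ref{lem:F} and \ref{lem:G}.
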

{\small\begin{remark} 
The remarks of Hackbusch \cite[§3.15]{Hackbusch} on numerical stabilization apply here, too.
\end{remark}}%

\begin{example}\label{ex:2}  By the second concrete case in Example~\ref{ex:1} we have
\begin{multline*}
V=\iiint\limits_{[0,1]^3} \iiint\limits_{[0,1]^3}  \frac{x_1x_2x_3 \;y_1^2y_2^2y_3^2\;\, dy_1dy_2dy_3\,\,dx_1dx_2dx_3}{\sqrt{(x_1-y_1)^2+(x_2-y_2)^2+(x_3-y_3)^2}}  \\*[1mm]
= \frac{2}{\sqrt{\pi}} \int_0^\infty \left(
\frac{\Erf(\sigma )}{4 \sigma
   ^3}+\frac{\Erf(\sigma )}{2 \sigma
   }-\frac{1}{2
   \sigma ^2}+\frac{e^{-\sigma ^2}}{4 \sigma ^2}\right)^3\,d\sigma.
\end{multline*}
The renormalized integrand $\tilde g(\sigma)$ reveals itself as
\begin{multline*}
\tilde g(\sigma) = \frac{1}{120}e^{-\sigma ^2} -\frac{1}{168}e^{-2
   \sigma ^2}-\frac{3}{224}e^{-3 \sigma ^2} \\*[1mm]
   +\frac{13}{560} \sigma^{-1} e^{-\sigma ^2} \Erf(\sigma )+\frac{1}{70 }\sigma^{-1}e^{-2 \sigma ^2}\Erf(\sigma )
   -\frac{61  }{1120}e^{-\sigma ^2}  \Erf(\sigma)^2,
\end{multline*}
which is translated by table \eqref{eq:I1}--\eqref{eq:I3}, without further ado, to the value\footnote{This result agrees exactly, term by term, with expression \eqref{eq:Hackbusch}, the output of Hackbusch's code \cite{HackbuschCode} when run on this example. However, since his code works internally with floating point numbers, just recasting them at final output by rational best approximations if {\tt TolRational} is met and the denominator is bounded by {\tt MaxDenominator}, one has to adjust the default choices of those parameters to accommodate the coefficient $61/13440$ here.} 
\[
V = \frac{1}{120}-\frac{\sqrt{2}}{336
   }-\frac{\sqrt{3}}{224}+\frac{13}{560} \log
  (1+\sqrt{2})+\frac{1}{70} \log
  (1+\sqrt{3})-\frac{\log \sqrt2}{70} -\frac{61 \pi
   }{13440},
\]
where the only simplification being made was using $\arctan(1/\sqrt{3})=\pi/6$.   
\end{example}

%\begin{example} For comparison, among four the examples discussed in \cite[§3.14]{Hackbusch}, we choose the identical cube case (numerically the most difficult one), that is
%\[
%V=\iiint\limits_{[0,1]^3} \iiint\limits_{[0,1]^3}  \frac{dy_1dy_2dy_3\,\,dx_1dx_2dx_3}{\sqrt{(x_1-y_1)^2+(x_2-y_2)^2+(x_3-y_3)^2}} =\frac{2}{\sqrt{\pi}} \int_0^\infty f(\sigma)^3\,d\sigma
%\]
%with 
%\[
%f(\sigma) = \iint\limits_{[0,1]^2} e^{-\sigma^2(x-y)^2} \,dy\,dx = \frac{ 2\Erf(\sigma )}{\sigma
%   }-\frac{1}{\sigma
%   ^2}+\frac{e^{-\sigma ^2}}{\sigma ^2}.
%\]
%The renormalization of $g(\sigma) = g(\sigma)^3$ is 
%\begin{multline*}
%\tilde g(\sigma) = \frac{2
%   e^{-\sigma ^2}}{5}+\frac{4}{5} e^{-2 \sigma ^2}-\frac{12}{5} e^{-3
%   \sigma ^2}\\*[1mm]
% +2\sigma^{-1}e^{-\sigma ^2} \Erf(\sigma ) +4\sigma^{-1}e^{-2 \sigma ^2} \Erf(\sigma
%   )-8   e^{-\sigma ^2} \Erf(\sigma )^2
%\end{multline*}
%which is translated by table \eqref{eq:I1}--\eqref{eq:I3} without further ado into the value
%\[
%V = \frac{2}{5}+\frac{2\sqrt{2}}{5} -\frac{4\sqrt{3}}{5}  +2 \log(1+\sqrt 2)-2\log2  +4\log(1+\sqrt{3})-\frac{2\pi}{3} . 
%\]
%\end{example}

\section{Evaluation of Force Components}\label{sect:Fj}

The method of this paper extends to the evaluation of the force exerted by the cuboid $Q$ on $Q'$. According to basic theoretical mechanics its component in the $j^\text{th}$ coordinate direction is, by taking the derivative of the point potential under the integral sign,
\begin{equation}\label{eq:FjVanilla}
F_j := \iiint\limits_Q\iiint\limits_{Q'}\frac{(x_j-y_j)\,x_1^{n_1}x_2^{n_2}x_3^{n_3} y_1^{m_1}y_2^{m_2}y_3^{m_3}\;dy_1dy_2dy_3\,\,dx_1dx_2dx_3}{\left((x_1-y_1)^2+(x_2-y_2)^2+(x_3-y_3)^2\right)^{3/2}}.
\end{equation}
As in the case of the potential, the monomials in the numerator accommodate, e.g., inhomogeneous materials (cf. \cite{cubic} and Section~\ref{sect:waldvogel}) or higher order finite elements. Inserting the derivative of the Laplace transform \eqref{eq:mean}, that is
\begin{equation}\label{eq:var}
\frac{1}{t^{3/2}} = \frac{2}{\sqrt{\pi}} \int_0^\infty 2\sigma^2 e^{-\sigma^2 t}\,d\sigma\qquad (t>0),
\end{equation}
gives, by Fubini's theorem and in complete analogy to \eqref{eq:V}, a representation by a single integral, namely
\begin{equation}\label{eq:Fj}
F_j = \frac{2}{\sqrt{\pi}} \int_0^\infty f_j^*(\sigma) \prod_{\substack{k=1\\*[0.5mm]k\neq j}}^3f_k(\sigma)\,d\sigma \qquad (j=1,2,3).
\end{equation}
Here, $f_k$ is defined as above and $f_j^*$ specifies the following function (with $n$, $m$ suppressed in the notation) for the parameters and variables with index $j$:
\begin{equation}\label{eq:fstar}
f^*(\sigma) := 2\sigma^2 \int_{a}^{b} \int_{a'}^{b'} x^{n}y^{m} (x-y)e^{-\sigma^2(x-y)^2} \,dy\,dx.
\end{equation}
 Expressing $f^*(\sigma)$ by a primitive, a direct application of Corollary~\ref{cor:f} would give
\[
\iint x^n y^m (x-y) e^{-(x-y)^2} \,dx\,dy = p^*_{n,m}(x,y) e^{-(x-y)^2} + q^*_{n,m}(x,y) \Erf(x-y)
\]
where $p^*_{n,m},\, q^*_{n,m} \in \Q[x,y]$ are polynomials of degree $n+m+1$ and $n+m+2$. 

Actually, and most important for the method to succeed after all, the leading terms of the polynomials constructed in this way cancel and their degrees are effectively reduced by two: 

\begin{lemma}\label{lem:reduction} The polynomial $p^*_{n,m}$ has degree\footnote{A negative degree indicates that a polynomial is zero.} $n+m-1$,  $q^*_{n,m}$ has degree $n+m$.
\end{lemma}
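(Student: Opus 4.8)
The plan is to integrate $x^ny^m(x-y)e^{-(x-y)^2}$ directly in closed form, keeping track of which powers of $x,y$ can appear, rather than blindly invoking Corollary~\ref{cor:f}. The key observation is that the factor $x-y$ is precisely (up to a constant) $-\tfrac12\partial_x e^{-(x-y)^2}$, so integration by parts in $x$ will eat one power of the polynomial and produce no new $\Erf$ term at that stage. Concretely, first I would write
\[
\int x^n y^m (x-y) e^{-(x-y)^2}\,dx = -\frac{1}{2}\int x^n y^m\, \partial_x\!\left(e^{-(x-y)^2}\right) dx = -\frac{x^n y^m}{2} e^{-(x-y)^2} + \frac{n}{2}\int x^{n-1} y^m e^{-(x-y)^2}\,dx,
\]
so that, using $A_{n-1}(x,y)=u_{n-1}(x,y)e^{-(x-y)^2}+v_{n-1}(y)\Erf(x-y)$ from Lemma~\ref{lem:AB},
\[
\int x^n y^m (x-y) e^{-(x-y)^2}\,dx = \left(-\frac{x^n}{2} + \frac{n}{2}u_{n-1}(x,y)\right) y^m e^{-(x-y)^2} + \frac{n}{2} v_{n-1}(y)\, y^m \Erf(x-y).
\]
The coefficient of $e^{-(x-y)^2}$ here is $-\tfrac12 x^n y^m + \tfrac{n}{2} u_{n-1}(x,y) y^m$; since $u_{n-1}$ has degree $n-2$, the total degree of this coefficient is $\max(n+m, n+m-2) = n+m$, and the coefficient of $\Erf(x-y)$ has degree $(n-1)+m = n+m-1$. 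So far nothing has been gained; the gain comes at the second step, the $y$-integration.

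\textbf{The $y$-integration and the cancellation.} Next I would integrate in $y$. For the $\Erf(x-y)$ part the power $\tfrac{n}{2}v_{n-1}(y)y^m$ has degree $n+m-1$, and integrating $y^k\Erf(y-x)=\Erf(-(x-y))$ against $dy$ via $B_k(y,x)$ raises the degree of the polynomial coefficients by one in the worst case, landing at degree $n+m$ for the $\Erf$ coefficient — consistent with the claim for $q^*_{n,m}$, but I still need to rule out degree $n+m+1$, which would come only from a $y^{n+m}$ term, which is absent since the highest power of $y$ present before integrating is $n+m-1$. For the $e^{-(x-y)^2}$ part I again want to use the $x-y$ trick in reverse: the subtle point is that the coefficient $-\tfrac12 x^n y^m + \tfrac{n}{2}u_{n-1}(x,y)y^m$ is, by construction, $y^m$ times exactly $\tfrac12(n u_{n-1}(x,y) - x^n)$, and comparing with the recursion $u_{n+1}=yu_n+\tfrac n2 u_{n-1}-\tfrac{x^n}{2}$ from Lemma~\ref{lem:AB} one recognizes $n u_{n-1}(x,y)-x^n$ as related to $u_{n+1}(x,y)-yu_n(x,y)$ — this algebraic identity among the $u$'s is what I would exploit to show the leading terms cancel when the $y$-integral is performed and the two contributions (from the $u$-expansion and the $v$-expansion, in the language of Corollary~\ref{cor:f}) are combined. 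A cleaner route, which I would actually prefer, is to do the analogous integration-by-parts in $y$ first: writing $x-y = \tfrac12\partial_y e^{-(x-y)^2}$, symmetrically, and noting that only one of the two integrations by parts should be "spent" on the factor $x-y$, one sees that effectively $f^*$ is built from $A_n$-type primitives with \emph{both} a reduced $x$-degree and a reduced $y$-degree compared to the naive bound, which accounts for the drop of two in total degree.

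\textbf{Main obstacle.} The routine part is the bookkeeping of polynomial degrees through the two applications of Lemma~\ref{lem:AB}. The genuine content — and the step I expect to be the main obstacle — is proving that the would-be leading terms of $p^*_{n,m}$ and $q^*_{n,m}$ (those of degree $n+m+1$ and $n+m+2$ respectively, as a blind count via Corollary~\ref{cor:f} predicts) actually have zero coefficient. I would establish this by induction on $n$, using the recursions for $u_n$ and $v_n$ to extract the top-degree part explicitly: the top-degree part of $u_n(x,y)$ is a single known monomial (coming from iterating $y\cdot$), and similarly for $v_n(y)=y^n+\dots$, so the leading coefficients of $p^*_{n,m}$, $q^*_{n,m}$ reduce to a small explicit polynomial identity in the top-degree coefficients, which one checks vanishes. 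Alternatively — and this is the argument I would write up if the induction gets messy — one can argue more conceptually: $\partial_x\partial_y\big(p^*_{n,m}e^{-(x-y)^2}+q^*_{n,m}\Erf(x-y)\big) = x^ny^m(x-y)e^{-(x-y)^2}$, and matching the coefficient of $y^{\deg}e^{-(x-y)^2}$ on both sides after differentiating forces the claimed degree bound, since the right-hand side has $x$-degree exactly $n+1$ and $y$-degree exactly $m+1$ in its polynomial prefactor, whereas differentiating a term of polynomial degree $d$ times $e^{-(x-y)^2}$ produces polynomial degree $d+1$ (the $-2(x-y)$ from the chain rule dominates), so $d+1 = n+m+1$, i.e. $d = n+m$, for $p^*$, and likewise $d+1 = n+m+2$, $d = n+m+1$ is forced down to $n+m$ by the $x$- and $y$-degree constraints read off separately. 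This degree-matching-after-differentiation argument is the cleanest and is what I would present.
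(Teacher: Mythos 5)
Your first, concrete route is sound and is in substance the paper's own proof: the paper obtains the same intermediate identity not by integration by parts but from the recursion \eqref{eq:A}, writing $\int x^n(x-y)e^{-(x-y)^2}\,dx=A_{n+1}(x,y)-yA_n(x,y)=\tfrac n2A_{n-1}(x,y)-\tfrac{x^n}{2}e^{-(x-y)^2}$, which is exactly your expression $\bigl(\tfrac n2u_{n-1}(x,y)-\tfrac{x^n}{2}\bigr)e^{-(x-y)^2}+\tfrac n2v_{n-1}(y)\Erf(x-y)$. From there, however, no further cancellation at the $y$-step is needed, contrary to your remarks that ``the gain comes at the second step'' and that leading terms must ``cancel when the $y$-integral is performed'': the gain was already made by absorbing the factor $x-y$, and the rest is plain bookkeeping. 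Integrating the three resulting pieces in $y$ via Lemma~\ref{lem:AB}, and using only that $u_k$ has degree $k-1$ while $v_k$ has degree $k$, gives contributions to $p^*_{n,m}$ of degrees at most $n+m-3$, $n+m-1$, $n+m-1$ and to $q^*_{n,m}$ of degrees at most $n+m-2$, $n+m$, $n+m$; that is precisely the paper's table and it finishes the proof. The identity $u_{n+1}-yu_n=\tfrac n2u_{n-1}-\tfrac{x^n}{2}$ that you want to invoke again at the second stage is the one you already used at the first stage and is not needed twice.

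The argument you say you would actually present---matching polynomial degrees in $\partial_x\partial_y\bigl(p^*_{n,m}e^{-(x-y)^2}+q^*_{n,m}\Erf(x-y)\bigr)=x^ny^m(x-y)e^{-(x-y)^2}$---has a genuine gap. First, the mixed partial multiplies the Gaussian prefactor of a degree-$d$ term by $x-y$ twice, once per differentiation, so the naive count is $d+2$, not $d+1$; second, and decisively, the Gaussian prefactor of the mixed partial mixes contributions coming from $p^*_{n,m}$ (through $(x-y)^2p^*_{n,m}$, etc.) and from $q^*_{n,m}$ (through $(x-y)q^*_{n,m}$), which can cancel at top degree, so degree matching alone yields no bound better than the naive one. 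Indeed, the same reasoning applied verbatim to the integrand $x^{n+1}y^me^{-(x-y)^2}$, whose polynomial prefactor has the same total degree $n+m+1$, would ``prove'' that its double primitive has Gaussian coefficient of degree $n+m$---whereas Corollary~\ref{cor:f} shows the degrees are genuinely $n+m+1$ and $n+m+2$. The degree drop asserted in Lemma~\ref{lem:reduction} is not a consequence of degree counting; it comes from the specific fact that $(x-y)e^{-(x-y)^2}=-\tfrac12\partial_xe^{-(x-y)^2}$, i.e., precisely from your first step. Present that route (it is the paper's) and drop the differentiation-and-degree-matching alternative.
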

\begin{proof} By Lemma~\ref{lem:AB}, and the recursion \eqref{eq:A}, we have
\begin{multline*}
\int x^n (x-y) e^{-(x-y)^2} \,dx = A_{n+1}(x,y)-y A_n(x,y)\\*[1mm]
 = \frac{n}{2}A_{n-1}(x,y) -\frac{x^n}{2} e^{-(x-y)^2}\qquad\\*[1mm]
 = \left(\frac{n}{2} u_{n-1} (x,y) - \frac{x^{n}}2\right) e^{-(x-y)^2} - \frac{n}{2}v_{n-1}(y) \Erf(y-x),
\end{multline*}
where $u_{n-1}(x,y)\in\Q[x,y]$ has degree $n-2$ and $v_{n-1}\in\Q[y]$ degree $n-1$. Now, as in the proof of
Corollary~\ref{cor:f}, yet another application of Lemma~\ref{lem:AB} gives
\begin{multline*}
\iint x^n y^m (x-y) e^{-(x-y)^2} \,dx\,dy \\*[1mm]
= \frac{n}{2}\int y^{m} u_{n-1} (x,y)  e^{-(x-y)^2} \,dy -\frac{x^n}{2}\int y^m e^{-(x-y)^2} \,dy\\*[1mm]
- \frac{n}{2}\int y^m v_{n-1}(y) \Erf(y-x)\,dy\\*[1mm]
= p^*_{n,m}(x,y) e^{-(x-y)^2} + q^*_{n,m}(x,y) \Erf(x-y).
\end{multline*}
The three integrals wrt $y$ contribute to the polynomials $p^*_{n,m}$ and $q^*_{n,m}$ with polynomials of degrees according to the table:
{\small\[
\begin{array}{c|c|c}
\text{integral\;}& \text{\rm \;degree of contribution to $p^*_{n,m}$\;} & \text{\rm \;degree of contribution to $q^*_{n,m}$\;} \\*[1mm]\hline
\,\text{1st}\phantom{\Big|} & \quad n+m-3 & n+m-2\\*[1mm]
\text{2nd} & \quad n+m-1 & n+m\\*[1mm]
\text{3rd} & \quad n+m-1 & n+m\\*[1mm]
\end{array}
\]}%
Hence $p^*_{n,m}$ has degree $n+m-1$ and  $q^*_{n,m}$ degree $n+m$.\qed
\end{proof}
Now, by substitution and rescaling we get the primitive
\begin{multline*}
f^*(\sigma;x,y):=2\sigma^2\iint x^n y^m (x-y)e^{-\sigma^2(x-y)^2} \,dx\,dy\\*[1mm] = \sigma^{-n-m-2} \left(2\sigma \,p^*_{n,m}(\sigma x, \sigma y)e^{-\sigma^2(x-y)^2} + 2\sigma\, q^*_{n,m}(\sigma x, \sigma y) \Erf(\sigma (x-y))\right).
\end{multline*}
This is exactly the same form as \eqref{eq:fsigma} for $f(\sigma;x,y)$, with just the replacements 
\begin{gather*}
p_{n,m}(\sigma x, \sigma y) \;\mapsto\; 2\sigma \,p^*_{n,m}(\sigma x, \sigma y),\\*[1mm]
q_{n,m}(\sigma x, \sigma y) \;\mapsto\; 2\sigma \,q^*_{n,m}(\sigma x, \sigma y).
\end{gather*}
Since the polynomial degrees wrt $\sigma$ are kept invariant this way (the decrease in the degree by $1$ in $p^*_{n,m}$ and $q^*_{n,m}$, as compared to $p_{n,m}$ and $q_{n,m}$, is compensated by the factor $2\sigma$), both of the factors $f(\sigma;x,y)$ and $f^*(\sigma;x,y)$ enjoy exactly the same structure.

Since we have used only that particular structure of $f(\sigma;x,y)$, Lemmas~\ref{lem:F} and \ref{lem:G} as well as Corollary \ref{cor:V} extend to the factor $f^*(\sigma)$ and to the evaluation of the force compontents $F_j$---with {\em literally} the same statements and proofs. This finishes, in particular, the proof of the Main Theorem stated in the Introduction.

\begin{example}\label{ex:TwoCubes} The evaluation of Trefethen's two-cubes problem \eqref{eq:Trefethen} starts with
\[
 F = \!\!\!\! \iiint\limits_{[1,2]\times[0,1]^2} \iiint\limits_{[0,1]^3}\frac{(x_1-y_1)\;dy_1dy_2dy_3\,\,dx_1dx_2dx_3}{\left((x_1-y_1)^2+(x_2-y_2)^2+(x_3-y_3)^2\right)^{3/2}}
  = \frac{2}{\sqrt{\pi}}\int_0^\infty \!\!g(\sigma)\,d\sigma,
 \]
where $g(\sigma) = f^*(\sigma) f(\sigma)^2$ factorizes into
\[
f^*(\sigma) = 2\sigma^2 \int_1^2\int_0^1 (x-y) e^{-\sigma^2(x-y)^2}\,dy\,dx = \frac{2\Erf(\sigma)}{\sigma}-\frac{\Erf(2\sigma)}{\sigma}
\]
and
\begin{equation}\label{eq:funit}
f(\sigma) = \iint\limits_{[0,1]^2} e^{-\sigma^2(x-y)^2} \,dy\,dx = \frac{ 2\Erf(\sigma )}{\sigma
   }-\frac{1}{\sigma
   ^2}+\frac{e^{-\sigma ^2}}{\sigma ^2}.
\end{equation}
The renormalized integrand $\tilde g(\sigma)$ reveals itself as an expression with 15 terms,
\begin{multline*}
\tilde g(\sigma) = \frac{2}{3} e^{-\sigma ^2} +\frac{4}{3} e^{-2
   \sigma ^2} -4 e^{-3 \sigma ^2} -\frac{32}{3} e^{-4
   \sigma ^2} +\frac{50}{3} e^{-5 \sigma ^2} -4 e^{-6 \sigma
   ^2}\\*[1mm]
   +\frac{10}{3}\sigma^{-1}e^{-\sigma ^2} \Erf(\sigma )   
   +\frac{20}{3}
   \sigma^{-1}e^{-2 \sigma ^2} \Erf(\sigma )
   -\frac{32}{3} \sigma^{-1}e^{-4
   \sigma ^2} \Erf(\sigma )\\*[1mm]
+\frac{8}{3} \sigma^{-1}e^{-5 \sigma ^2}
   \Erf(\sigma )
   -\frac{1}{3}\sigma^{-1}e^{-\sigma ^2}
   \Erf(2 \sigma )
   -\frac{2}{3}\sigma^{-1}e^{-2 \sigma ^2} \Erf(2
   \sigma )\\*[1mm]
   -\frac{40}{3}
   e^{-\sigma ^2} \Erf(\sigma )^2 + 32 e^{-4 \sigma ^2} \Erf(\sigma )^2+\frac{8}{3}
   e^{-\sigma ^2} \Erf(\sigma )\Erf(2 \sigma ),
   \end{multline*}
\end{example}
which is directly translated by table \eqref{eq:I1}--\eqref{eq:I3}, just combining two rationals and three rational multiples of $\log 2$ as well as using $\arctan(1/\sqrt{3})=\pi/6$, to 
\begin{multline}\label{eq:Bornemann}
F = -\frac{14}{3} +\frac{2\sqrt{2}}{3} -\frac{4\sqrt{3}}{3}  + \frac{10\sqrt{5}}{3} -\frac{2\sqrt{6}}{3} + \frac{23\log 2}{3}- \frac{4\log5}{3} \\*[1mm]
   +\frac{10}{3}\log(1+\sqrt{2})   
   +\frac{20}{3}\log(1+\sqrt{3}) 
   -\frac{32}{3} \log(1+\sqrt{5})\\*[1mm]
+\frac{8}{3} \log(1+\sqrt{6})     -\frac{1}{3}\log(2+\sqrt{5})   -\frac{2}{3}\log(2+\sqrt{6}) \\*[1mm]
   -\frac{10\pi}{9} + 8 \arctan\frac{1}{2\sqrt6} + \frac{4}{3}\arctan\frac{2}{\sqrt{6}}.
\end{multline}
{\small\begin{remark} 
By noting
\[
\arctan\frac{1}{2\sqrt6} =\frac{\pi}{2} - \arctan(2\sqrt{6}),\qquad 
\arctan\frac{2}{\sqrt{6}} =\frac{1}{2}\arctan(2\sqrt{6}),
\]
and
\[
(2+\sqrt{5})(2+\sqrt6)^2 = 2^{-3}5^{-2}(1+\sqrt5)^3(1+\sqrt6)^2(4+\sqrt6)^2,
\]
expression \eqref{eq:Bornemann} can be brought directly into Fornberg's original 14 term variant \eqref{eq:Fornberg}.
\end{remark}}%
Without the factor $x_j-y_j$ in the numerator of the integral \eqref{eq:FjVanilla}, and the thus induced reduction of the polynomial degrees by two as stated in Lemma~\ref{lem:reduction}, the method generally fails to deliver closed form solutions of a sixfold integral with an $r^{-3}$ kernel. E.g., we have
\begin{multline*}
H := \iiint\limits_{[1,2]^3}\iiint\limits_{[0,1]^3} \frac{dy_1dy_2dy_3\,\,dx_1dx_2dx_3}{\left((x_1-y_1)^2+(x_2-y_2)^2+(x_3-y_3)^2\right)^{3/2}}\\*[1mm]
= \frac{2}{\sqrt\pi}\int_0^\infty 2\sigma^2 \left(-\frac{2 \Erf(\sigma )}{\sigma }+\frac{2
   \Erf(2 \sigma )}{\sigma }+\frac{1}{2 \sigma ^2} -\frac{e^{-\sigma ^2}}{\sigma
   ^2}+\frac{e^{-4 \sigma
   ^2}}{2 \sigma ^2}\right)^3\,d\sigma.
\end{multline*}
Expansion of the latter integrand generates terms, among others, of the form
\[
\sigma^{-1} \Erf(\sigma\delta_1)\Erf(\sigma\delta_2)\Erf(\sigma\delta_3) \qquad (\delta_1,\delta_2,\delta_3\neq 0)
\]
which are not amenable to the renormalization algorithm of Section~\ref{sect:V}. Note that those terms are not even integrable at $\sigma\to\infty$, though their singularities cancel since the integral for~$H$ converges itself. If we restore integrability by one further step of partial integration, we create terms of the form
\[
 \log(\sigma)e^{-\sigma^2\delta_1^2}\Erf(\delta_2\sigma)\Erf(\delta_3\sigma),
\]
whose integrals are, cf. the argument given at the end of Appendix B, highly unlikely to be evaluable in elementary finite terms.
In this case we have to resort to numerical methods, which for the single integral wrt $\sigma$ give, quite straightforwardly,\footnote{Here, and in the numerical examples below, we apply {\em Mathematica}'s {\tt NIntegrate} command directly to the single integral at hand.} the value
\[
H = 0.24660\,45031\,79184\,67694\,\cdots.
\]

\section{Other Dimensions}\label{sect:otherdim}

\subsection*{\em The one-dimensional case}

In 1D we consider, with an additional parameter $\rho\geq 0$ taken as $\rho>0$ to ensure convergence in the case of overlapping intervals $I$ and $I'$, 
\[
V = \int_I\int_{I'} \frac{x^ny^m\,dy\,dx}{\sqrt{(x-y)^2+ \rho ^2}} = \frac{2}{\sqrt\pi} \int_0^\infty f(\sigma)e^{-\sigma^2\rho^2}\,d\sigma.
\]
With $\rho>0$, the results of Sects.~\ref{sect:f} and \ref{sect:V} apply and we get that $V$ is a linear combination of terms
of the form
\[
\frac{1}{\sqrt{\rho ^2+\delta^2}}, \quad  \log\left(\delta + \sqrt{\rho ^2+\delta^2}\right)-\log \rho,
\]
where $0\neq\delta \in\Delta$ and the coefficients are rational polynomials in $\rho$ and the integral bounds.
If the intervals don't overlap, we can take the limit $\rho\to0^+$ since the logarithmic singularities  at $\rho =0$ must cancel, i.e., the coefficients of $\log \rho $ must ultimately sum to zero in that limit. We note that, if $\rho=0$ and $\Delta\subset\Z$, the value of $V$ is a rational linear combination of $1,\log 2, \big\{\!\log|\delta|\big\}_{0\neq \delta\in\Delta}$.

\begin{example} By Example~\ref{ex:1}, with the concrete $f(\sigma)$ given in \eqref{eq:f15}, we have 
\[
V = \int_2^3\int_0^1 \frac{xy^2}{|x-y|}\,dy\,dx = \frac{2}{\sqrt\pi}\int_0^\infty f(\sigma)\,d\sigma.
\]
With $\Delta=\{1,2,3\}$ and $\rho =0$ we know in advance that the result is a rational linear combination of $1$, $\log 2$, and $\log 3$.
The renormalized integrand $\tilde g(\sigma)$ is
\begin{multline*}
\tilde g(\sigma) = -\frac{75}{16} e^{-\sigma ^2} +70 e^{-4 \sigma ^2} - \frac{1701}{16}
   e^{-9 \sigma ^2} \\*[1mm]
   + \frac{15}{4} \sigma^{-1}\Erf(\sigma ) - 24\sigma^{-1}\Erf(2 \sigma ) +\frac{81}{4}\sigma^{-1}\Erf(3 \sigma ).
\end{multline*}
Because of $\frac{15}{4} -24 + \frac{81}{4}= 0$, we can apply the log variants of entry \eqref{eq:I2} of the integration table even in the case $\rho=0$, by just formally ignoring the singular term $\log 0$, and get
\[
V = -\frac{41}{8}-24 \log (2)+\frac{81 \log (3)}{4}.
\]
%As a verification, this was checked against the result of Hackbusch's code \cite{HackbuschCode}.
\end{example}

\subsection*{\em The two-dimensional case}
In 2D we consider, with an additional parameter $\rho\geq 0$, on rectangles $R$, $R'$,
\[
V = \iint\limits_R\iint\limits_{R'} \frac{x_1^{n_1}x_2^{n_2}y_1^{m_1}y_2^{m_2}\,dy_1dy_2\,dx_1dx_2}{\sqrt{(x_1-y_1)^2+(x_2-y_2)^2+ \rho ^2}} = \frac{2}{\sqrt\pi} \int_0^\infty f_1(\sigma)f_2(\sigma)e^{-\sigma^2 \rho ^2}\,d\sigma.
\]
The results of Sects.~\ref{sect:f} and \ref{sect:V} apply and we get that $V$ is a linear combination of terms
of the form \eqref{eq:Vform}, where the $\arctan$ form does only appear if $\rho>0$.

\begin{example}\label{ex:5} As in Example~\ref{ex:2} we have
\begin{multline*}
V=\iint\limits_{[0,1]^2} \iint\limits_{[0,1]^2}  \frac{x_1x_2\; y_1^2y_2^2\;\, dy_1dy_2\,\,dx_1dx_2}{\sqrt{(x_1-y_1)^2+(x_2-y_2)^2}}  \\*[1mm]
= \frac{2}{\sqrt{\pi}} \int_0^\infty \left(
\frac{\Erf(\sigma )}{4 \sigma
   ^3}+\frac{\Erf(\sigma )}{2 \sigma
   }-\frac{1}{2
   \sigma ^2}+\frac{e^{-\sigma ^2}}{4 \sigma ^2}\right)^2\,d\sigma.
\end{multline*}
The renormalized integrand $\tilde g(\sigma)$ reveals itself as
\[
\tilde g(\sigma) = \frac{1}{12}e^{-\sigma ^2}-\frac{3}{20} e^{-2 \sigma
   ^2} + \frac{19}{120}\sigma^{-1} e^{-\sigma ^2} \Erf(\sigma ),
\]
which is translated by table \eqref{eq:I1}--\eqref{eq:I3}, without further ado, to 
\[
V = \frac{1}{12} - \frac{3\sqrt{2}}{40} + \frac{19}{120}\log(1+\sqrt2).
\]
%As a verification, this was checked against the result of Hackbusch's code \cite{HackbuschCode}.
\end{example}

\subsection*{\em The general $d$-dimensional case}

In the $d$-dimensional case we have, using vector notation,\footnote{As is common in vector calculus, we write $x^n := x_1^{n_1}\cdots \;x_d^{n_d}$, $y^m := y_1^{m_1}\cdots \;y_d^{m_d}$.}% 
\begin{equation}\label{eq:Vdgeneral}
V = \int_Q\int_{Q'} \frac{x^n y^m}{\|x-y\|_2} \,dy\,dx = \frac{2}{\sqrt{\pi}}\int_0^\infty f_1(\sigma)\cdots f_d(\sigma)\,d\sigma,
\end{equation}
where $n,m\in\N_0^d$ are multi-indices and $Q$, $Q'$ are axis-parallel rectangular hypercuboids.
The renormalization algorithm, suitably supplemented to accommodate more factors, results in  a linear combination of terms of the form
\[
e^{-\sigma^2\delta_1} \Erf(\sigma\delta_2)\cdots \Erf(\sigma\delta_{2k-1}),\qquad \sigma^{-1} e^{-\sigma^2\delta_1} \Erf(\sigma\delta_2)\cdots \Erf(\sigma\delta_{2k})
\]
where $\nu=2k-1$ and $\nu=2k$ are restricted to $1\leq \nu \leq d$. 

However, as argued in the last paragraph of Appendix B, it is highly unlikely that there is an elementary closed form expression for integrals of such terms with three or more  factors of $\Erf$.
Thus, for $d\geq 4$ we do not expect the Newton potential $V$ to be evaluable in elementary finite terms. Instead, we have to resort to numerical methods for the single integral in \eqref{eq:Vdgeneral}.

\begin{example}\label{ex:6} By \eqref{eq:funit} we get\footnote{Equation \eqref{eq:Vd}, which expresses the self-energy $E_d=V_d/2$ of the unit $d$-cube by this particular single integral, was first obtained by Batle et al. \cite{Batle} in 2017 and used there to tabulate $E_d$ for $d=2,\ldots,28$ to five digit accuracy (with some errors in the last digit). These authors give closed forms for $E_2$ \cite[Eq.~(12)]{Batle} (see also \cite[Eq.~(18)]{Ciftja2}) and $E_3$ \cite[Eq.~(15)]{Batle} (see also \cite[Eq.~(6)]{Ciftja3}), and then speculate about the general case \cite[p.~55]{Batle}: “However, any analytic calculation is expected
to be very lengthy and challenging even for the simplest case of $d=4$.” }
\begin{equation}\label{eq:Vd}
V_d = \int_{[0,1]^d}\int_{[0,1]^d}\frac{dy\,dx}{\|x-y\|_2} = \frac{2}{\sqrt\pi}\int_0^\infty\left(\frac{ 2\Erf(\sigma )}{\sigma
   }+\frac{e^{-\sigma ^2}-1}{\sigma ^2}\right)^d\,d\sigma.
\end{equation}
For $d=4$ the renormalized integrand $\tilde g(\sigma)$ reveals itself as
\begin{multline*}
\tilde g(\sigma) = \frac{32}{315} e^{-\sigma ^2} +\frac{136}{105} e^{-2 \sigma
   ^2} -\frac{48}{35} e^{-3 \sigma ^2}-\frac{368}{315} e^{-4 \sigma ^2}\\*[1mm]
   +\frac{4}{5}\sigma^{-1}e^{-\sigma ^2} \Erf(\sigma )
   +\frac{32}{5} \sigma^{-1} e^{-2\sigma ^2} \Erf(\sigma )
   +\frac{4}{5 } \sigma ^{-1} e^{-3 \sigma ^2}
   \Erf(\sigma )\\*[1mm]
-\frac{32}{5} e^{-\sigma ^2}
   \Erf(\sigma )^2-\frac{32}{5} e^{-2 \sigma ^2}
   \Erf(\sigma )^2
    -\frac{16}{3}\sigma^{-1} e^{-\sigma ^2} \Erf(\sigma )^3,
\end{multline*}
which is translated by table \eqref{eq:I1}--\eqref{eq:I3}, without further ado, to
\begin{multline*}
V_4 = -\frac{152}{315}+\frac{68 \sqrt{2}}{105}-\frac{16
   \sqrt{3}}{35}-\frac{16 \log
   (2)}{5}+\frac{2 \log (3)}{5}\\*[0.375mm]+\frac{4}{5} \log
  (1+\sqrt{2})+\frac{32}{5} \log
  (1+\sqrt{3})-\frac{8 \pi }{15}-\frac{8\sqrt{2}}{5}  \arctan\frac{1}{2 \sqrt{2}}\\*[0.375mm]
   -\frac{16}{3}\frac{2}{\sqrt\pi}\int_0^\infty \sigma^{-1} e^{-\sigma ^2} \Erf(\sigma )^3\,d\sigma.
\end{multline*}
As argued at the end of the Appendix B, it is highly unlikely that the reminder integral has an elementary closed form expression. Also, its numerical value
\[
\frac{2}{\sqrt\pi}\int_0^\infty \sigma^{-1} e^{-\sigma ^2} \Erf(\sigma )^3\,d\sigma = 0.20145\,64675\,53825\,02025\cdots
\]
does not reveal any structure when put to the Inverse Symbolic Calculator\footnote{\url{http://wayback.cecm.sfu.ca/projects/ISC/ISCmain.html}}---whereas all the other integrals of terms in $\tilde g(\sigma)$ would have enjoyed a far better fate.
However, using the single integral representation in \eqref{eq:Vd}, a direct numerical integration gives easily
\[
V_4 = 1.48143\,26365\,21064\,74974\cdots,
\]
or being even more daring,
\[
V_{100} = 0.24625\,54841\,88745\,57533\cdots\,.
\]
\end{example}

\section{Applications: The Newton Potential and Force Field of a Cube}\label{sect:waldvogel}

As a quantity of interest in physics, the potential of a homogeneous rectangular cuboid $Q$
at a point $(y_1,y_2,y_3)$ in space---which we fix, to simplify notation, at the origin after performing an appropriate translation---is given by
\begin{equation}\label{eq:Vxyz}
V_0 := \iiint\limits_{Q}\frac{dx_1dx_2dx_3}{\sqrt{x_1^2+x_2^2+x_3^2}} =\frac{2}{\sqrt\pi}\int_0^\infty h_1(\sigma)h_2(\sigma)h_3(\sigma)\,d\sigma,
\end{equation}
where $h_j$ specifies the following function for index $j$, obtained from \eqref{eq:Erf}:
\[
h(\sigma) := \int_a^b e^{-\sigma^2x^2}dx = \left.\frac{\Erf(\sigma x)}{\sigma}\right|_{a}^b.
\]
The integrand in \eqref{eq:Vxyz} is thus (with bounds $a_j,b_j$ belonging to $\delta_j$)
\begin{equation}\label{eq:waldvogelintegrand}
h_1(\sigma)h_2(\sigma)h_3(\sigma) 
=\frac{\Erf(\sigma \delta_1) \Erf(\sigma \delta_2) \Erf(\sigma \delta_3)}{\sigma^3}\bigg|_{a_1}^{b_1} \bigg |_{a_2}^{b_2} \bigg |_{a_3}^{b_3};
\end{equation}
that is, an expression in terms of the form \eqref{eq:nosing} with $\kappa=1$, which is the only form listed in \eqref{eq:origerfterms} that is integrable at $\sigma=0$. Although not needed here to remove cancelling singularities, we still apply the renormalization algorithm of Section~\ref{sect:V} to get the standard integrals \eqref{eq:I1}--\eqref{eq:I3}: in this way, by applying rule \eqref{eq:rule4} first and rule \eqref{eq:rule3} next, we simply read off that\\*[-6mm]
\begin{multline*}
V_0 =\frac{2}{\sqrt{\pi}}\int_0^\infty\bigg(\sum_{\text{cyc:$\delta$}}
\Big( \delta_1\delta_2\,\sigma^{-1}e^{-\sigma^2(\delta_1^2+\delta_2^2)} \Erf(\sigma \delta_3)\\*[-4mm]
-\delta_1^3e^{-\sigma^2\delta_1^2}\Erf(\sigma\delta_2)\Erf(\sigma\delta_3)\Big)\bigg)\bigg|_{a_1}^{b_1} \bigg |_{a_2}^{b_2} \bigg |_{a_3}^{b_3}\,d\sigma.
\end{multline*}\\*[-3mm]
This is instantly translated by table \eqref{eq:I1}--\eqref{eq:I3} to Waldvogel's formula \eqref{eq:waldvogel},\footnote{
Hackbusch \cite[§4]{Hackbusch} grants the potential $V(y_1,y_2,y_3)$, equipped with additional monomial factors in the numerator of the integrand, just a short mention by saying that it “appears, e.g., in the collocation method and can be treated similarly”. He sketches the modifications of his method but does not offer any specific formulae. The Laplace transform technique was previously used by Ciftja \cite[Eq.~(32)]{Ciftja15} in 2015 for expressing the potential $V(y_1,y_2,y_3)$ in form of the single integral \eqref{eq:Vxyz} with an integrand
similar to \eqref{eq:waldvogelintegrand}. However, the subsequent explicit evaluation, ultimately yielding Waldvogel's for\-mula~\eqref{eq:waldvogel} in the form \cite[Eq. (14)]{Ciftja15}, extends over~4 additional pages in his paper.}
\[
V_0 =\Bigg(\sum_{\text{cyc:$\delta$}}
\bigg( \delta_1\delta_2 \arctanh\frac{\delta_3}{\rho_\delta} - \frac{\delta_1^2}{2}\arctan\frac{\delta_2\delta_3}{\delta_1\rho_\delta}\bigg)\Bigg)\Bigg|_{a_1}^{b_1} \Bigg |_{a_2}^{b_2} \Bigg |_{a_3}^{b_3}.
\]
Waldvogel had obtained this formula by evaluating, in a first step, the third component of the force field---initially given, as a derivative of the potential, by means of a double integral---in form of an expression with 24 terms. Only then, without any further explicit integration, by using Euler's theorem on homogeneous functions, he arrived at the potential $V_0$. 

It is entertaining to note that those 24 term expressions for the components of the force field of a homogeneous rectangular cuboid were obtained as early as 1822--30 by geodesist George Everest (of Mt Everest fame): he used them to estimate that the gravitational attraction of the Satpura Range, a tableland in Central India commencing as far as 30km off to the north, would have deflected a plumb line at Takal K'hera (Takarkheda) such that the zenith is thrown $5.1''$ southwards.\footnote{An estimate that Everest \cite[p.~104]{Everest} checked against another one, just $0.6''$ smaller, which he had obtained from matching a wide triangulation to a model of the equatorial bulge.} The force-field expressions, expanded in all their glory, occupy a whole page \cite[p.~97]{Everest} in his monumental “Account of the Measurement of an Arc of the Meridian between the Parallels of $18^\circ 3'$ and $24^\circ 7'$.”

Instead of reproducing Everest's force field of a homogeneous cuboid (which can, of course, easily be done using the method of this paper), we will discuss yet another problem taken from geophysics \cite{cubic}: namely, 
 the computation of the force field, once more evaluated at the origin, if the cuboid has a density variation in the third coordinate direction, modeled by a cubic polynomial. In analogy to \eqref{eq:FjVanilla} and \eqref{eq:Fj} we get, when looking at the  monomial $x_3^n$ of the polynomial density profile, a contribution to the force in the third coordinate direction in form of the quantity
\begin{equation}\label{eq:Fxyz}
H_n :=\iiint\limits_{Q}\frac{x_3^{n+1} \; dx_1dx_2dx_3}{(x_1^2+x_2^2+x_3^2)^{3/2}} =\frac{2}{\sqrt\pi}\int_0^\infty h_1(\sigma)h_2(\sigma)h_3^*(\sigma)\,d\sigma.
\end{equation}
Here $h_j$ is defined as above and, by reference to Lemma~\ref{lem:AB} and Section~\ref{sect:Fj}, taking the leading order $n=3$ of the cubic as an example,
\[
h_3^*(\sigma) := 2\sigma^2 \int_{a_3}^{b_3}x^4 e^{-\sigma^2x^2} \,dx =\left.\left(
\frac{3 \Erf(\sigma  x)}{2 \sigma ^3}
-\frac{3
   x e^{-\sigma ^2 x^2}}{2 \sigma
   ^2}
-x^3 e^{-\sigma ^2 x^2}
\right)\right|_{a_3}^{b_3}.
\]
In this way we obtain the integrand $h_1(\sigma)h_2(\sigma)h_3^*(\sigma)$ of \eqref{eq:Fxyz} in the form
\[
\frac{\Erf(\sigma \delta_1) \Erf(\sigma \delta_2) }{\sigma^2}\left(\frac{3 \Erf(\sigma  \delta_3)}{2 \sigma ^3}
-\frac{3
   \delta_3 e^{-\sigma ^2 \delta_3 ^2}}{2 \sigma
   ^2}
-\delta_3 ^3 e^{-\sigma ^2 \delta_3 ^2}\right)\Bigg|_{a_1}^{b_1} \Bigg |_{a_2}^{b_2} \Bigg |_{a_3}^{b_3},
\]
which is directly amenable to the renormalization algorithm of Section~\ref{sect:V}:
\begin{multline*}
H_3  = \frac{2}{\sqrt\pi}\int_0^\infty \frac{1}{4}\bigg(
\delta_1\delta_2\delta_3\rho_\delta^2 \; e^{- \sigma^2\rho_\delta} \\*[1mm]
- 2\delta_1\delta_2(\delta_1^2+\delta_2^2) \; \sigma^{-1}e^{-\sigma^2(\delta_1^2+\delta_2^2)} \Erf(\sigma\delta_3)\\*[0.5mm]
 +2\sum_{\text{cyc:$\delta$}}
\delta_1^5\; e^{-\sigma^2\delta_1^2} \Erf(\sigma\delta_2)\Erf(\sigma\delta_3)\bigg)\bigg|_{a_1}^{b_1} \bigg |_{a_2}^{b_2} \bigg |_{a_3}^{b_3}\;d\sigma.
\end{multline*}
This is instantly translated by table \eqref{eq:I1}--\eqref{eq:I3} to 
\begin{multline}\label{eq:garcia}
H_3 = \frac{1}{4}\bigg(\delta_1\delta_2\delta_3\rho_\delta
 - 2\delta_1\delta_2(\delta_1^2+\delta_2^2)\log(\delta_3 + \rho_\delta) \\*[1mm]
+ \sum_{\text{cyc:$\delta$}}\delta_1^4\arctan\frac{\delta_2\delta_3}{\delta_1\rho_\delta}\bigg)\bigg|_{a_1}^{b_1} \bigg |_{a_2}^{b_2} \bigg |_{a_3}^{b_3},
\end{multline}
where we have dropped the terms that are independent of $\delta_3$, namely
\[
2\delta_1\delta_2(\delta_1^2+\delta_2^2)\log\sqrt{\delta_1^2+\delta_2^2},
\]
since they would have cancelled when taking the difference  at the bounds of $\delta_3$. With similar easy we get the results for the other monomials of the cubic, or any other polynomial to begin with.

The closed form \eqref{eq:garcia} for $H_3$ agrees exactly with the expression \cite[Eq.~(20)]{cubic} which was obtained, using direct successive integration, by geophysicist Juan García-Abdeslem  in 2005 when studying a mass density varying with depth, modeled to follow a cubic profile---as given, e.g.,  by a fit to the density logging of 46 wells in  Green Canyon, located offshore Louisiana, in the Gulf of Mexico. As ever so often in this story, García-Abdeslem checked his closed form solution against a numerical method that he had published some 13 years earlier.

%\newpage

{\small \section*{\protect{\small Appendix}}
\section*{\protect{\small  A Two historic examples of multivariate integration by Laplace transform}}
We give two historic examples of the technique used in this paper, one in multivariate calculus attributed to Cauchy, and one from multivariate probability theory, due to Montroll in 1956. Impressive applications of the Laplace transform to a certain class of multiple integrals over polyhedra and ellipsoids can be found in \cite{MR1873902}.

\medskip

\begin{itemize}
\item In his course on differential and integral calculus, Fikhtengol'ts \cite[§650, Example 10]{Fichtenholz} discussed an example of  evaluating  a multivariate integral in terms of a single one by the Laplace transform technique, attributing the method to Cauchy (“\foreignlanguage{russian}{следуя Коши}”):
\begin{multline*}
\int_0^\infty\!\!\!\cdots\int_0^\infty \frac{x_1^{p_1-1}\cdots \;x_n^{p_n-1}e^{-(a_1x_1+\cdots+a_nx_n)}}{(b_0+b_1x_1+\cdots +b_nx_n)^q}\,dx_1\cdots \,dx_n \\*[5mm]= \frac{\GAMMA(p_1)\cdots \GAMMA(p_n)}{\GAMMA(q)}\int_0^\infty \frac{e^{-b_0 s}s^{q-1}}{(a_1+b_1 s)^{p_1}\cdots(a_b+b_n s)^{p_n}}\,ds \qquad (a_i,b_j,p_k,q>0).
\end{multline*}
This formula is obtained by inserting the Laplace transform (generalizing \eqref{eq:mean} and \eqref{eq:var})
\begin{equation}\label{eq:Gamma}
\frac{1}{t^q} = \frac{1}{\GAMMA(q)} \int_0^\infty s^{q-1}e^{-st}\,ds\qquad (t,q>0),
\end{equation}
changing the order of integration, and observing that \eqref{eq:Gamma} implies likewise
\[
\int_0^\infty x_j^{p_j-1} e^{-(a_j+b_js)x_j}\,dx_j = \frac{\GAMMA(p_j)}{(a_j+b_js)^{p_j}}\qquad (a_j,b_j,p_j,s>0).
\]
\item In 1956 Montroll \cite{MR88110} expressed the expected number $E$ of visits to the starting place of a symmetric random walk on the hypercubic lattice $\Z^n$, first, as the $n$-fold integral
\[
E= \frac{1}{\pi^n} \int_0^\pi\!\!\cdots\int_0^\pi \frac{d\phi_1\cdots \,d\phi_n}{1-(\cos\phi_1 + \ldots +\cos \phi_n)/n}.
\]
Next, inserting the $q=1$ case of the Laplace transform \eqref{eq:Gamma}, that is
\[
\frac{1}{t}  = \int_0^\infty e^{-st}\,ds\qquad (t>0),
\]
followed by changing the order of integration, and obeserving that
\[
I_0(\xi) = \frac{1}{\pi} \int_0^\pi e^{\xi \cos\phi}\,d\phi
\]
is the modified Bessel function of the first kind, Montroll obtained the single integral
\[
E= \int_0^\infty e^{-s} I_0(s/n)^n\,ds.
\] 
Using this, he proved that symmetric random walks are transient for $n\geq 3$, calculated the return probability for $n=3$ to 9 digits, and studied the limit of large dimensions. See \cite[§6.7]{MR2076374} for a generalization to biased random walks.
\end{itemize}}

{\small
\section*{\protect{\small B Proof of the two entries \eqref{eq:I2} and \eqref{eq:I3} of the integral table}}
For the sake of completeness, we prove the two integral formulae \eqref{eq:I2} and \eqref{eq:I3} that were found as entries in 
the table of integrals \cite{MR950173}. Let be $a,b,c\in\R$ with $a\neq 0$. 

\begin{itemize}
\item Using \eqref{eq:mean}, which is also the first entry \eqref{eq:I1} in the integral table of Section~\ref{sect:V}, we get by differentiation under the integral sign
\[
\frac{\partial }{\partial b} \frac{2}{\sqrt\pi}\int_0^\infty x^{-1} e^{-x^2 a^2}\Erf(b x)\,dx = \frac{2}{\sqrt\pi}\int_0^\infty e^{-x^2(a^2+b^2)}\,dx = \frac{1}{\sqrt{a^2+b^2}}.
\]
Since also
\[
\frac{\partial }{\partial b} \log(b +\sqrt{a^2+b^2}) = \frac{1}{\sqrt{a^2+b^2}},
\]
we obtain  \cite[Eq.~(2.8.5.8)]{MR950173} in form of the second log variant of \eqref{eq:I2}, that is
\begin{equation}\label{eq:I2new}
\frac{2}{\sqrt\pi}\int_0^\infty x^{-1} e^{-x^2 a^2}\Erf(b x)\,dx = \log(b +\sqrt{a^2+b^2}) - \log \sqrt{a^2}.
\end{equation}
\item Using \eqref{eq:var} we get by differentiation under the integral sign
\begin{multline*}
\frac{\partial^2 }{\partial b\,\partial c} \frac{2}{\sqrt\pi}\int_0^\infty  e^{-x^2 a^2}\Erf(b x)\Erf(c x)\,dx \\*[1mm]= \frac{2}{\sqrt\pi}\int_0^\infty x^2 e^{-x^2(a^2+b^2+c^2)}\,dx = \frac{1}{2(a^2+b^2+c^2)^{3/2}}.
\end{multline*}
Since also
\[
\frac{\partial^2 }{\partial b\,\partial c} \frac{1}{2a}\arctan\frac{b c}{a\sqrt{a^2+b^2+c^2}} = \frac{1}{2(a^2+b^2+c^2)^{3/2}},
\]
we obtain \cite[Eq.~(2.8.19.8)]{MR950173} in the form \eqref{eq:I3}, that is
\begin{equation}\label{eq:I3new}
\frac{2}{\sqrt\pi}\int_0^\infty  e^{-x^2 a^2}\Erf(b x)\Erf(c x)\,dx = \frac{1}{2a}\arctan\frac{b c}{a\sqrt{a^2+b^2+c^2}}.
\end{equation}
\item This technique reveals the limits of integration in finite terms of such expressions if there are three or more factors of $\Erf$ in the integrand. For instance, by \eqref{eq:I3new} 
we get
\begin{multline*}
\frac{\partial }{\partial a} \frac{2}{\sqrt\pi}\int_0^\infty  x^{-1} e^{-x^2}\Erf(x)^2\Erf(a x)\,dx  \\*[1mm]
= \frac{2}{\sqrt\pi}\int_0^\infty e^{-x^2(1+a^2)} \Erf(x)^2\,dx= \frac{\arctan\frac{1}{\sqrt{1+a^2}\sqrt{3+a^2}}}{2\sqrt{1+a^2}} 
\end{multline*}
and thus
\[
\frac{2}{\sqrt\pi}\int_0^\infty  x^{-1} e^{-x^2}\Erf(x)^2\Erf(a x)\,dx  = \int_0^a \frac{\arctan\frac{1}{\sqrt{1+ x ^2}\sqrt{3+ x ^2}}}{2\sqrt{1+x^2}} \,dx.
\]
Therefore, any closed form expression for the definite integral on the left, taken as a function of the para\-meter~$a$, would serve as a primitive of the 
integrand on the right. There is no apparent such primitive in  elementary finite terms, and as the sophisticated Risch--Davenport--Bronstein type algorithms for indefinite integration, at least as far as they are implemented in {\em Mathematica} and Maple, fail to provide one, such a primitive is highly unlikely to exist at all (since, if correctly implemented, a failure of those algorithms amounts as proof of the impossibility of indefinite integration in finite terms). In the specific case here, a rigorous proof could also be based on some differential Galois theory; it is left to the experts.
\end{itemize}}

\begin{acknowledgements} I would like to thank Nick Trefethen and Bengt Fornberg for their encouragement to write this paper---after they had seen a very short personal note about an easy way to obtain Fornberg's startling solution \eqref{eq:Fornberg} of Trefethen's two-cubes problem. Bengt brought Hackbusch's paper \cite{Hackbusch} to my attention, which was profoundly helpful in gaining a broader perspective on the subject; and Nick made valuable suggestions how to write a more engaging exposition (besides taking care of some of my English infelicities).

As a personal aside I would like to confess that this paper was stimulated by my “school-boy fascination” with special numerical expressions such as \eqref{eq:Fornberg}; thereby keeping Alf van der Poorten's remark from his foreword to Leonard Lewin's book \cite{MR618278} in mind: 
\begin{quote}
“To me it has occasionally seemed that mathematics threatens to be too serious a subject. Wonderful formulas are condemned as unimportant curiosities; or worse, as well known.
This book will assist in stemming any such trend.”
\end{quote}
I can only hope that this paper will assist in stemming any such trend, too.
\end{acknowledgements}

\bibliographystyle{spmpsci}		% mathematics and physical sciences
\bibliography{paper.bib}   		% name your BibTeX data base

\begin{thebibliography}{10}
\providecommand{\url}[1]{{#1}}
\providecommand{\urlprefix}{URL }
\expandafter\ifx\csname urlstyle\endcsname\relax
  \providecommand{\doi}[1]{DOI~\discretionary{}{}{}#1}\else
  \providecommand{\doi}{DOI~\discretionary{}{}{}\begingroup
  \urlstyle{rm}\Url}\fi

\bibitem{Batle}
Batle, J., Ciftja, O., Naseri, M., Ghoranneviss, M., Nagata, K., Nakamura, T.:
  Coulomb self-energy integral of a uniformly charged $d$-cube: {a}
  physically-based method for approximating multiple integrals.
\newblock J. Electrost. \textbf{85}, 52--60 (2017)

\bibitem{MR2076374}
Bornemann, F., Laurie, D., Wagon, S., Waldvogel, J.: {The {SIAM} 100-Digit
  Challenge: A Study in High-Accuracy Numerical Computing}.
\newblock SIAM, Philadelphia (2004)

\bibitem{Ciftja2}
Ciftja, O.: Coulomb self-energy and electrostatic potential of a uniformly
  charged square in two dimensions.
\newblock Phys. Lett. A \textbf{374}, 981--983 (2010)

\bibitem{Ciftja3}
Ciftja, O.: Coulomb self-energy of a uniformly charged three-dimensional cube.
\newblock Phys. Lett. A \textbf{375}, 766--767 (2011)

\bibitem{Ciftja15}
Ciftja, O.: Concise presentation of the {C}oulomb electrostatic potential of a
  uniformly charged cube.
\newblock J. Electrost. \textbf{76}, 127--137 (2015)

\bibitem{Everest}
Everest, G.: An Account of the Measurement of an Arc of the Meridian between
  the Parallels of $18^\circ 3'$ and $24^\circ 7'$, Being a Continuation of the
  Grand Meridional Arc of India.
\newblock Parnury, Allen \& Co., London (1830)

\bibitem{Fichtenholz}
Fikhtengol'ts, G.M.: A Course in Differential and Integral Calculus III
  (Russian).
\newblock State Publishing House of Technical and Theoretical Literature,
  Moscow, Leningrad (1949)

\bibitem{cubic}
Garc{\'\i}a-Abdeslem, J.: The gravitational attraction of a right rectangular
  prism with density varying with depth following a cubic polynomial.
\newblock Geophys. \textbf{70}, J39--J42 (2005)

\bibitem{HackbuschCode}
Hackbusch, W.: Direct integration of the {N}ewton potential over cubes
  including a program description (2001).
\newblock
  \urlprefix\url{https://www.mis.mpg.de/publications/preprints/2001/prepr2001-68.html}.
\newblock MPI MiS Preprint 68/2001

\bibitem{Hackbusch}
Hackbusch, W.: Direct integration of the {N}ewton potential over cubes.
\newblock Computing \textbf{68}, 193--216 (2002)

\bibitem{MR1873902}
Lasserre, J.B., Zeron, E.S.: Solving a class of multivariate integration
  problems via {L}aplace [transform] techniques.
\newblock Appl. Math. \textbf{28}, 391--405 (2001)

\bibitem{MR618278}
Lewin, L.: Polylogarithms and Associated Functions.
\newblock North-Holland Publ. Co., New York, Amsterdam (1981)

\bibitem{MacMillan}
MacMillan, W.D.: The Theory of the Potential.
\newblock McGraw--Hill, New York (1930)

\bibitem{MR88110}
Montroll, E.W.: Random walks in multidimensional spaces, especially on periodic
  lattices.
\newblock J. Soc. Indust. Appl. Math. \textbf{4}, 241--260 (1956)

\bibitem{MR950173}
Prudnikov, A.P., Brychkov, Y.A., Marichev, O.I.: {Integrals and Series. {V}ol.
  2: Special Functions}, 2nd edn.
\newblock Gordon \& Breach Science Publishers, New York (1988)

\bibitem{MR2655347}
Temme, N.M.: Error functions, {D}awson's and {F}resnel integrals.
\newblock In: N{IST} Handbook of Mathematical Functions, pp. 159--171.
  Cambridge University Press, Cambridge (2010)

\bibitem{Tref1}
Trefethen, L.N.: Ten digit problems.
\newblock In: D.~Schleicher, M.~Lackmann (eds.) An Invitation to Mathematics:
  From Competitions to Research, pp. 119--136. Springer, Berlin (2011)

\bibitem{Tref2}
Trefethen, L.N.: Two cubes.
\newblock Newsletter of the LMS \textbf{491}, 17 (2020)

\bibitem{Trott}
Trott, M.: Calculating the energy between two cubes (2012).
\newblock
  \urlprefix\url{https://blog.wolfram.com/2012/10/23/calculating-the-energy-between-two-cubes/}

\bibitem{MR442257}
Waldvogel, J.: The {N}ewtonian potential of a homogeneous cube.
\newblock Z. Angew. Math. Phys. \textbf{27}, 867--871 (1976)

\end{thebibliography}

\end{document}